\numberwithin{equation}{section}
\newcommand{\ben}{\begin{enumerate}}
\newcommand{\een}{\end{enumerate}}
\newcommand{\bea}{\begin{eqnarray}}
\newcommand{\ba}{\begin{array}}
\newcommand{\bean}{\begin{eqnarray*}}
\newcommand{\ea}{\end{array}}
\newcommand{\eea}{\end{eqnarray}}
\newcommand{\eean}{\end{eqnarray*}}
\newcommand{\beq}{\begin{equation}}
\newcommand{\eeq}{\end{equation}}
\newcommand{\bthm}{\begin{thm}}
\newcommand{\ethm}{\end{thm}}
\newcommand{\blem}{\begin{lem}}
\newcommand{\elem}{\end{lem}}
\newcommand{\bprop}{\begin{prop}}
\newcommand{\eprop}{\end{prop}}
\newcommand{\bcor}{\begin{cor}}
\newcommand{\ecor}{\end{cor}}
\newcommand{\bdfn}{\begin{dfn}}
\newcommand{\edfn}{\end{dfn}}
\newcommand{\brem}{\begin{rem}}
\newcommand{\erem}{\end{rem}}
\newcommand{\bpf}{\begin{proof}}
\newcommand{\epf}{\end{proof}}
\newcommand{\bfact}{\begin{fact}}
\newcommand{\efact}{\end{fact}}
\newtheorem{thm}{Theorem}[section]
\newtheorem{prop}[thm]{Proposition}
\newtheorem{lem}[thm]{Lemma}
\newtheorem{cor}[thm]{Corollary}
\newtheorem{dfn}[thm]{Definition}
\newtheorem{rem}[thm]{Remark}
\newtheorem{fact}[thm]{Fact}
\newtheorem{ex}[thm]{Example}
\def\N{{\mathbb N}}                \def\Z{{\mathbb Z}}      \def\R{{\mathbb R}}
\def\C{{\mathbb C}}                  \def\oc{\hat \C}
\def\1{1\!\!1}
\def\and{\text{ and }}
      \def\Exp{\text{{\rm Exp}}}
\def\Epb{\text{{\rm Epb}}}
     \def\HD{\text{{\rm HD}}}
              \def\om{\omega}           
\def\Sg{\Sigma}               \def\sg{\sigma}
\def\({\bigl(}                \def\){\bigr)}
                        \def\^{\tilde}
\def\es{\emptyset}
\def\sp{\medskip}
\def\om{\omega}
\begin{document}

\title[]
{\bf\large {\Large B}owen  Parameter and Hausdorff Dimension for Expanding 
Rational Semigroups}
\date{February 8, 2012. Published in Discrete and Continuous Dynamical Systems Ser. A, 
 32 (2012), no. 7, 2591--2606.}
\author[\sc Hiroki SUMI]{\sc Hiroki SUMI}
%
\author[\sc Mariusz URBA\'NSKI]{\sc Mariusz URBA\'NSKI}
%
%
\thanks{The first author thanks University of North Texas for support and kind hospitality. 
Research of the first author was partially supported by JSPS KAKENHI 21540216.  
Research of the second author supported in part by the
NSF Grant DMS 0400481.}
\thanks{\ \newline 
\noindent Hiroki Sumi\newline 
Department of Mathematics,
Graduate School of Science,
Osaka University, 
1-1 Machikaneyama,
Toyonaka,
Osaka, 560-0043, 
Japan\newline 
E-mail: sumi@math.sci.osaka-u.ac.jp\newline 
Web: http://www.math.sci.osaka-u.ac.jp/$\sim$sumi/\newline
\ \newline 
Mariusz Urba\'nski\newline Department of Mathematics,
 University of North Texas, Denton, TX 76203-1430, USA\newline  
E-mail: urbanski@unt.edu\newline
Web: http://www.math.unt.edu/$\sim$urbanski/}

\keywords{Complex dynamical systems, rational semigroups, expanding semigroups,
Julia set, Hausdorff dimension, 
Bowen parameter, random complex dynamics} 
\begin{abstract}
We estimate the Bowen parameters and the Hausdorff dimensions of the
Julia sets of expanding finitely generated rational semigroups.  
We show that the Bowen parameter is larger than or equal to the ratio of the
entropy of the skew product map $\tilde{f}$ and the Lyapunov exponent of $\tilde{f}$ with 
respect to the maximal entropy measure for $\tilde{f}$. Moreover, we show that the equality holds if and only if 
the generators are simultaneously conjugate to the form $a_{j}z^{\pm d}$ by a M\"{o}bius transformation.  
Furthermore, we show that there are plenty of expanding finitely generated rational semigroups such that 
the Bowen parameter is strictly larger than $2$. 
\end{abstract}
\maketitle
 Mathematics Subject Classification (2001). Primary 37F35; 
Secondary 37F15.

\section{Introduction}
\label{Introduction}
A {\bf rational semigroup} 
is a semigroup generated by a family of 
non-constant rational maps $g:\oc \rightarrow \oc $, 
where $\oc $ denotes the Riemann sphere, 
with the semigroup operation being functional composition. 
A polynomial semigroup is a semigroup generated by a 
family of non-constant polynomial maps on $\oc .$ 
The work on the dynamics of rational semigroups was initiated 
by A. Hinkkanen and G. J. Martin (\cite{HM}), 
who were interested in the role of the dynamics of polynomial semigroups 
while studying various one-complex-dimensional moduli spaces for discrete 
groups of M\"{o}bius transformations, and by F. Ren's group 
(\cite{ZR}), who studied such semigroups from the perspective 
of random dynamical systems. 
  
 The theory of the dynamics of rational semigroups on $\oc $ 
has developed in many directions since the 1990s (\cite{HM, ZR,
   SSS, sumihyp1, sumihyp2,  
hiroki1, sumi1, sumi2, sumi06, sumirandom,
su0, su1, sumid1, sumid3, SS, sumiintcoh, sumiprepare}).  
Since the Julia set $J(G)$ of a rational semigroup 
generated by finitely many elements $f_{1},\ldots ,f_{s}$ 
has {\bf backward self-similarity} i.e. 
\begin{equation}
\label{bsseq}
 J(G)=f_{1}^{-1}(J(G))\cup \cdots \cup f_{s}^{-1}(J(G)),
\end{equation}  
(see \cite{sumihyp1, hiroki1}), 
it can be viewed as a significant generalization and extension of 
both the theory of iteration of rational maps (see \cite{M}) 
and conformal 
iterated function systems (see \cite{MU}). 
Indeed, because of (\ref{bsseq}), 
the analysis of the Julia sets of rational semigroups somewhat
resembles 
 ``backward iterated functions systems'', however since each map 
$f_{j}$ is not in general injective (critical points), some 
qualitatively different extra effort in the cases of semigroups is needed.
The theory of the dynamics of 
rational semigroups borrows and develops tools 
from both of these theories. It has also developed its own 
unique methods, notably the skew product approach 
(see \cite{hiroki1, sumi1, sumi2, sumi06, sumid1, sumid2, sumid3, sumiprepare,su0,suetds1,su1}).

The theory of the dynamics of rational semigroups is intimately 
related to that of the random dynamics of rational maps. 
For the study of random complex dynamics, the reader may 
consult \cite{FS,Bu1,Bu2,BBR,Br,GQL, sumiprepare}. The deep relation between these fields 
(rational semigroups, random complex dynamics, and (backward) IFS) 
is explained in detail in the subsequent papers
 (\cite{sumirandom, sumid1, sumid2, sumid3,  sumikokyuroku, sumiintcoh, sumiprepare}) of the first author.

 In this paper, we deal at length with  Bowen's parameter $\delta $ (the unique zero of the pressure function) 
of expanding finitely generated rational semigroups $\langle f_{1},\ldots ,f_{s}\rangle $ (see Definition~\ref{d:Bpara}). 
In the usual iteration dynamics of a single expanding rational map, 
it is well known that the Hausdorff dimension of the Julia set is equal to 
the Bowen's parameter. For a general expanding finitely generated rational semigroup 
$\langle f_{1},\ldots ,f_{s}\rangle $,  
it was shown that the Bowen's parameter is larger than or equal to the
Hausdorff dimension of the Julia set  
(\cite{sumihyp2,sumi2}). If we assume further that the semigroup satisfies
the ``open set condition'' (see Definition~\ref{d:osc}),  
then it was shown that they are equal (\cite{sumi2}). 
However, if we do not assume the open set condition, then 
there are a lot of examples such that the Bowen's parameter is strictly larger than 
the Hausdorff dimension of the Julia set. In fact, 
the Bowen's parameter can be  strictly larger than two. 
Thus, it is very natural to ask when we have this situation and what
happens if we have such a case.  
We will show the following.
\begin{thm}[Theorem~\ref{t:main1}]
\label{t:main1int}
For an expanding rational semigroup $G=\langle
f_{1},\ldots ,f_{m}\rangle $,  
the Bowen's parameter $\delta $ satisfies 
\begin{equation}
\label{eq:mainineq}
\delta \geq \frac{\log (\sum _{j=1}^{s}\deg (f_{j}))}{\int \log \| \tilde{f}'\| d\mu },
\end{equation}  
where $\tilde{f}$ denotes the skew product map associated with 
the multi-map $f=(f_{1},\ldots ,f_{s})$ (see section~\ref{Prel}), 
and $\mu $ denotes the unique
maximal entropy measure for $\tilde{f}$ (see \cite{pubook,hiroki1}).    
Moreover,  the equality in the \eqref{eq:mainineq} holds  if and only if 
we have a very special condition, i.e., there exists a M\"{o}bius
transformation $\varphi $  
and a positive integer $d_{0}$ 
such that for each $j$, $\varphi f_{j}\varphi ^{-1}(z)$ is of the form $a_{j}z^{\pm d_{0}}$.    
\end{thm}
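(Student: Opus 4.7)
My plan is to handle the inequality by a straightforward pressure-variational argument with the maximal entropy measure, and then to derive the rigidity in the equality case by combining uniqueness of equilibrium states, a Livsic-type cohomological conclusion, and classical rigidity for single rational maps together with compatibility across generators.

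For the inequality, I would begin with the characterization (Definition~\ref{d:Bpara}) of $\delta$ as the unique zero of the topological pressure $t\mapsto P(\tilde f,-t\log\|\tilde f'\|)$. Applied to the maximal entropy measure $\mu$, the variational principle gives
\[
0 \;=\; P\bigl(\tilde f,-\delta\log\|\tilde f'\|\bigr)\;\geq\; h_\mu(\tilde f)\;-\;\delta\!\int\!\log\|\tilde f'\|\,d\mu.
\]
Since the topological entropy of the skew product is $\log\bigl(\sum_j\deg f_j\bigr)$ by the entropy formula for skew products of rational maps (\cite{hiroki1}) and this is attained by $\mu$, rearranging yields \eqref{eq:mainineq}. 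The ``if'' direction of the equality statement will follow from a direct computation: if every $f_j$ is M\"obius-conjugate via a common $\varphi$ to $a_jz^{\pm d_0}$, then after conjugation the common Julia set is $S^1$, the measure $\mu$ pushes down to Lebesgue on $S^1$, and $\log\|\tilde f'\|$ is identically $\log d_0$ on $J(\tilde f)$, so both sides of \eqref{eq:mainineq} coincide.

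For the ``only if'' direction, equality forces $\mu$ to be an equilibrium state for the H\"older potential $-\delta\log\|\tilde f'\|$. Since $\tilde f$ is open and expanding, equilibrium states of H\"older potentials are unique, so $\mu$ coincides with the $\delta$-conformal equilibrium measure. By a Livsic-type theorem for expanding systems, two H\"older potentials sharing an equilibrium state must be cohomologous modulo a constant, so every periodic point $p$ of $\tilde f$ of period $n$ satisfies $\|(\tilde f^n)'(p)\|=e^{cn}$ with the single constant $c=\log(\sum_j\deg f_j)/\delta$. Restricting this cohomology condition to the constant sequence $(\ldots,j,j,j,\ldots)$ in the shift factor of $\tilde f$ shows that each individual generator $f_j$ has constant spherical Lyapunov exponent $c$ on all of its periodic orbits lying in $J(G)$.

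A rational map with all periodic multipliers equal in the spherical metric is, by classical rigidity (Zdunik and its extensions), M\"obius-conjugate to one of $z^{\pm d_j}$, a Chebyshev polynomial, or a flexible Latt\`es map. Coupling this with the semigroup structure, mixed periodic words of $\tilde f$ force the conjugating M\"obius transformations associated to different generators to be realized by one common $\varphi$; simultaneous rigidity then excludes Chebyshev (whose Julia set $[-2,2]$ is incompatible with the round-circle normalization imposed by any monomial generator) and Latt\`es (whose Julia set is all of $\hat{\mathbb C}$, incompatible with a shared $J(G)$), and forces $d_j=d_0$ independent of $j$ since the common constant $c$ determines the exponent. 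The main obstacle is precisely this last rigidity step: single-generator classification a priori permits three exceptional classes, and one must exploit the joint dynamics on mixed periodic orbits and the shared Julia set to synchronize the M\"obius conjugation across all generators and to rule out Chebyshev and Latt\`es -- this delicate compatibility work is where most of the effort will go.
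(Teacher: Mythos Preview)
Your variational-principle argument for the inequality, and your route to ``$\mu$ equals the $\delta$-conformal equilibrium state, hence $\log\|\tilde f'\|$ is cohomologous to a constant'' are essentially what the paper does (phrased there via convexity of $t\mapsto P(t,f)$ and an explicit transfer-operator computation rather than an abstract Livsic statement). Two points, however, deserve correction or comparison.

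First, your ``if'' direction is not right as written. When $\varphi f_j\varphi^{-1}=a_jz^{\pm d_0}$ with $|a_j|\neq 1$, the Julia set of $G$ is \emph{not} the unit circle and $\log\|\tilde f'\|$ is \emph{not} identically $\log d_0$; each fiber $J_\omega(\tilde f)$ is a circle of radius $r_\omega$ depending on $\omega$, and what one actually gets is that $\log|\tilde f'|$ is \emph{cohomologous} to the constant $\log d_0$ via the coboundary $u(\omega):=\log r_\omega$. That still yields equality, but the argument is a cohomology computation, not a pointwise one.

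Second, and more substantively, in the ``only if'' direction the paper does not apply Zdunik's theorem to the individual generators and then try to synchronize; it applies the cohomology conclusion, and hence Zdunik's theorem, to \emph{every} composition $f_w$ with $w\in\Sigma_s^*$. Since $G$ is expanding, each $f_w$ is hyperbolic, which immediately rules out the Chebyshev and Latt\`es cases (neither is hyperbolic), so every $f_w$ is M\"obius-conjugate to a power map. This is much easier than what you anticipate as ``where most of the effort will go.'' The synchronization then reduces to a short critical-point count: fix $\varphi$ with $\varphi f_1\varphi^{-1}=z^{\pm d_1}$; if $(\varphi f_j\varphi^{-1})^{-1}\{0,\infty\}\neq\{0,\infty\}$, then $\varphi f_1 f_j\varphi^{-1}$ (respectively $\varphi f_1 f_j f_1\varphi^{-1}$ when $d_j=1$) acquires at least three distinct critical points, contradicting that it too is conjugate to a power map. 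This forces $\varphi f_j\varphi^{-1}=a_jz^{\pm d_j}$ with the \emph{same} $\varphi$, and the common cohomology constant then forces all $d_j$ equal. Your plan to synchronize via ``mixed periodic words'' is in the right spirit, but the paper's device of exploiting Zdunik on compositions turns the hard step into an elementary one.
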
 
Note that 
$\log (\sum _{j=1}^{s}\deg (f_{j}))$ is equal to the entropy of $\tilde{f}.$ 
The above result (Theorem~\ref{t:main1}) generalizes a weak form of
A. Zdunik's theorem (\cite{zdunik2}), which is a result  
for the usual iteration of a single rational map. In fact, in the
proof of the main result of our paper,  
Zdunik's theorem is one of the key ingredients. 
We emphasize that in the main
result of our paper, we can take the M\"{o}bius map $\varphi $ which does not depend on $j.$   
    
If each $f_{j}$ is a polynomial with $\deg (f_{j})\geq 2$, then by using potential theory, we can calculate $\int \log \| \tilde{f}'\| d\mu $ 
in (\ref{eq:mainineq}) 
in terms of $\deg (f_{j})$ and an integral related to fiberwise Green's functions 
 (see Lemmas~\ref{l:lyapcalc}, \ref{l:lyappbd}). 
From this calculation, we can prove the following.  
\begin{thm}[Theorem~\ref{t:main2}]
\label{t:main2int}
Let $s\in \Bbb{N} $ and 
for each $j=1,\ldots ,s$, let $f_{j}$ be a polynomial with $\deg (f_{j})\geq 2.$ 
If $G=\langle f_{1},\ldots ,f_{s}\rangle $ is an expanding polynomial semigroup, 
the postcritical set of $G$ in $\C $ is bounded, 
$(\log d)/(\sum _{j=1}^{s}\frac{d_{j}}{d}\log d_{j})\geq 2$ where
$d_{j}:=\deg (f_{j})$ and $d=\sum _{j=1}^{s}d_{j}$,  
and $\delta \leq 2$, then there exists a M\"{o}bius transformation
$\varphi $ such that for each $j$,  
$\varphi f_{j}\varphi ^{-1}(z)$ is of the form $a_{j}z^{s}.$ 
\end{thm}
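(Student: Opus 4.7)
The plan is to feed Theorem~\ref{t:main1int} with the bounded-postcritical-set hypothesis, squeeze $\delta$ from both sides, and then invoke the rigidity half of that theorem. First I would use the cited Lemmas~\ref{l:lyapcalc} and \ref{l:lyappbd}, which evaluate $\int \log \|\tilde f'\|\, d\mu$ for expanding polynomial semigroups in potential-theoretic terms. I expect these to take the form
\[
\int \log \|\tilde f'\|\, d\mu \;=\; \sum_{j=1}^{s}\frac{d_{j}}{d}\log d_{j} \;+\; (\text{nonnegative term from fiberwise Green's functions along critical orbits}),
\]
in complete analogy with the single-polynomial identity $\chi(f)=\log\deg f + \sum_{c} G_{f}(c)$. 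The hypothesis that the postcritical set of $G$ in $\C$ is bounded says precisely that the forward orbit of every critical point of every $f_{j}$ stays away from the basin of infinity, where the fiberwise Green's function is positive; hence the correction vanishes and
\[
\int \log \|\tilde f'\|\, d\mu \;=\; \sum_{j=1}^{s}\frac{d_{j}}{d}\log d_{j}.
\]

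Substituting into Theorem~\ref{t:main1int} and using the standing hypothesis gives
\[
\delta \;\geq\; \frac{\log d}{\sum_{j=1}^{s}(d_{j}/d)\log d_{j}} \;\geq\; 2.
\]
Combined with the assumed upper bound $\delta\le 2$, both inequalities must in fact be equalities. In particular $\delta=2$, the lower bound in Theorem~\ref{t:main1int} is attained, and $\log d = 2\sum_{j}(d_{j}/d)\log d_{j}$. The rigidity half of Theorem~\ref{t:main1int} then produces a M\"obius map $\psi$ and an integer $d_{0}\geq 2$ with $\psi f_{j}\psi^{-1}(z)=a_{j}z^{\pm d_{0}}$ for every $j$; in particular $d_{j}=d_{0}$ for all $j$ and hence $d=sd_{0}$.

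To pass from this abstract conjugacy form to the desired conclusion I would use that each $f_{j}$ is a polynomial and therefore carries a superattracting fixed point at $\infty$. The map $a_{j}z^{-d_{0}}$ has derivative $-d_{0}$ at every one of its fixed points, so it admits no superattracting fixed point; hence only the ``$+$'' case is possible, and $\psi(\infty)\in\{0,\infty\}$. After replacing $\psi$ by $1/\psi$ if necessary (which only relabels the $a_{j}$) we may assume $\psi$ is affine and $\psi f_{j}\psi^{-1}(z)=a_{j}z^{d_{0}}$. Finally, plugging $d=sd_{0}$ and $d_{j}=d_{0}$ into $\log d = 2\sum_{j}(d_{j}/d)\log d_{j}$ yields $\log(sd_{0})=2\log d_{0}$, i.e.\ $d_{0}=s$, and setting $\varphi:=\psi$ completes the proof.

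The step I expect to be the main obstacle is the opening Lyapunov computation: identifying $\int \log \|\tilde f'\|\, d\mu$ explicitly as $\sum_{j}(d_{j}/d)\log d_{j}$ under bounded postcriticality requires disintegrating the maximal entropy measure $\mu$ of $\tilde f$ over the one-sided shift, applying the potential-theoretic formula for the Lyapunov exponent in each fiber, and then showing that the only nonnegative correction — the total Green-function mass collected along the critical orbits — is annihilated by the hypothesis. Once this computation is in hand, the remainder of the argument is a formal squeeze between two inequalities followed by elementary bookkeeping to pin down the degrees and the conjugating map.
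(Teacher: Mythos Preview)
Your proposal is correct and follows essentially the same route as the paper: compute the Lyapunov exponent via Lemma~\ref{l:lyappbd} under bounded postcriticality, squeeze $\delta$ between the two hypotheses to force equality in Theorem~\ref{t:main1}, and then read off the conjugacy form and $d_{0}=s$. The paper is slightly terser---it invokes part (c) of Theorem~\ref{t:main1} (namely $\delta(f)=1+\log s/\log d_{0}$) directly and tacitly uses that the $f_{j}$ are polynomials to take $\varphi\in\mbox{Aut}(\C)$ with the ``$+$'' sign---points which you make explicit.
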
 
Thus, 
if the postcritical set of $G$ in $\C $ is bounded and $(\log d)/(\sum
_{j=1}^{s}\frac{d_{j}}{d}\log d_{j})\geq 2$,  
then typically we have that $\delta >2.$ Note that in the usual iteration
dynamics of a single rational map, we always have $\delta \leq 2.$ 

Therefore, we can say that there are plenty of expanding finitely
generated polynomial semigroups for which   
the Bowen's parameter is strictly larger than 2. 

Note that combining these estimates of Bowen's parameter and the
``transversal family'' type arguments, we will show 
that we have a large amount of expanding $2$-generator polynomial semigroups $G$ 
such that the Julia set of $G$ has positive 2-dimensional Lebesgue measure (\cite{su}).

 We remark that, as illustrated in \cite{sumikokyuroku, sumiprepare}, 
 estimating the Hausdorff dimension of the Julia sets of 
 rational semigroups plays an important role when we 
 investigate random complex dynamics and its associated 
 Markov process on $\oc .$ For more details, see Remark~\ref{r:dimran} and 
\cite{sumikokyuroku, sumiprepare}.  
\section{Preliminaries}
\label{Prel} 
In this section we introduce notation and basic definitions. 
Throughout the paper, we frequently follow the notation 
from \cite{hiroki1} and \cite{sumi2}. 
\begin{dfn}[\cite{HM,ZR}] 
A ``rational semigroup" $G$ is a semigroup generated by a family of 
non-constant 
rational maps $g:\oc \rightarrow \oc$,\ where $\oc $ denotes the 
Riemann sphere,\ with the semigroup operation being functional 
composition. 
A ``polynomial semigroup'' is a semigroup generated by a 
family of non-constant polynomial maps on $\oc .$ 
For a rational semigroup $G$, we set 
$$
F(G):=\{ z\in \oc \mid G \mbox{ is normal in a neighborhood of } z\} 
$$
and we call $F(G)$ the {\bf Fatou set} of $G$. Its complement,
$$
J(G):=\oc \setminus F(G)
$$ 
is called the {\bf Julia set} of $G.$ 
If $G$ is generated by a family $\{ f_{i}\} _{i}$,\ 
then we write $G=\langle f_{1},f_{2},\ldots \rangle .$ 
\end{dfn} 
For the papers dealing with dynamics of rational semigroups, 
see for example \cite{HM,ZR,SSS, sumihyp1,sumihyp2,
hiroki1,sumi1, sumi2,sumi06, sumirandom, su0, su1, 
sumid1, sumid2, sumid3,SS,sumiintcoh, sumiprepare, sumikokyuroku}, etc. 

We denote by  Rat the set of all non-constant 
rational maps on $\oc $ endowed with 
the topology induced by uniform convergence on $\oc .$ 
Note that  Rat has countably many connected 
components. In addition, each connected component 
$U$ of Rat is an open subset of Rat and 
$U$ has a structure of a finite dimensional complex manifold.  
Similarly, we denote by ${\mathcal P}$ the set of all polynomial maps
$g:\oc \rightarrow \oc $  
with $\deg (g)\geq 2$ endowed with the relative topology from Rat. 
Note that ${\mathcal P}$ has countably many connected 
components. In addition, each connected component 
$U$ of ${\mathcal P}$ is an open subset of ${\mathcal P}$ and 
$U$ has a structure of a finite dimensional complex manifold.

Let $V$ be an open subset of $\oc $ and let $z\in V.$ 
 For a holomorphic map $g:V\rightarrow \oc $, 
we denote by $\| g'(z)\| $ the norm of the derivative of $g:V\rightarrow \oc $
 at $z$ with respect to the spherical metric on $\oc .$ 

\begin{dfn}
For each $s\in \Bbb{N}$, 
let $\Sigma _{s}:=\{ 1,\ldots ,s\} ^{\Bbb{N}}$ be the 
space of one-sided sequences of $s$-symbols endowed with the 
product topology. This is a compact metrizable space. 
For each $f=(f_{1},\ldots ,f_{s})\in (\mbox{{\em Rat}})^{s}$, 
we define a map   
$$
\tilde{f}:\Sg_{s}\times \oc \rightarrow \Sg_{s}\times \oc 
$$ 
by the formula 
$$
\tilde{f}(\om,z)=(\sg (\om ),\ f_{\om_{1}}(z)),
$$
where $(\om,z)\in \Sg _{s}\times \oc,\ \om=(\om_{1},\om_{2},\ldots ),$ and 
$\sg :\Sigma _{s}\rightarrow \Sg _{s}$ denotes the shift map. 
The transformation $\tilde{f} :\Sigma _{s}\times \oc \rightarrow 
\Sigma _{s}\times \oc $ is called the {\bf skew product map} associated 
with the multi-map $f=(f_{1},\ldots ,f_{s})\in (\mbox{{\em Rat}})^{s} .$  
We denote by $\pi _{1}:\Sigma _{s}\times \oc \rightarrow \Sigma _{s}$ 
the projection onto $\Sg_{s}$ and by $\pi_{2}:\Sg _{s}\times\oc\rightarrow\oc$ 
the projection onto $\oc $. That is, $\pi _{1}(\om ,z)=\om $ and 
$\pi _{2}(\om ,z)=z.$  For each $n\in \Bbb{N} $ and $(\om ,z)\in 
\Sigma _{s}\times \oc $, we put 
$$
(\tilde{f}^{n})'(\om ,z):= (f_{\om_{n}}\circ \cdots \circ f_{\om _{1}})'(z).
$$ 
Moreover, we denote by $\| (\tilde{f}^{n})'(\om ,z)\| $ the norm of the derivative 
of $f_{\om _{n}}\circ \cdots \circ f_{\om _{1}}$ at $z$ with respect to the 
spherical metric on $\oc .$ 
We define 
$$J_{\om }(\tilde{f}):=\{ z\in \oc \mid 
 \{ f_{\om _{n}}\circ \cdots \circ f_{\om _{1}}\} _{n\in \Bbb{N}} \mbox{ is 
 not normal in each neighborhood of } z\} 
$$ 
for each $\om \in  \Sigma _{s}$ and we set 
$$
J(\tilde{f}):= \overline{\cup _{w\in \Sigma _{s}}\{ \om \} 
\times J_{\om }(\tilde{f}) },
$$ 
where the closure is taken with respect to the product topology on the space
$\Sigma _{s}\times \oc .$ $J(\tilde{f})$ is called the 
{\bf Julia set} of the skew product map $\tilde{f}.$ In addition, we set 
$F(\tilde{f}):=(\Sigma _{s}\times \oc )\setminus J(\tilde{f}).$ and 
$
\deg(\tilde{f}):=\sum _{j=1}^{s}\deg(f_{j}).
$
\end{dfn}

\begin{rem}
\label{rem1} 
By definition, the set
 $J(\tilde{f})$ 
 is compact. Furthermore,\ if we set 
 $G=\langle f_{1},\ldots ,f_{s}\rangle $, then, 
 by  \cite[Proposition 3.2]{hiroki1},\ the following hold:  
\begin{enumerate}
\item  
 $J(\tilde{f})$ is completely invariant under $\tilde{f}$;\ 
\item 
 $\tilde{f}$ is an open map on $J(\tilde{f})$;\ 
\item 
if 
 $\sharp J(G)\geq 3$ and $E(G):= \{ z\in \oc \mid 
 \sharp \cup _{g\in G}g^{-1}\{ z\}<\infty \} $ is contained in 
 $F(G)$,  then  the dynamical system $(\tilde{f},J(\tilde{f}))$ is topologically exact; 
\item  
 $J(\tilde{f})$ is equal to the closure of 
 the set of repelling periodic points of 
 $\tilde{f}$ if 
 $\sharp J(G)\geq 3$,\ where we say that a periodic point 
 $(\om ,z)$ of $\tilde{f}$ with 
 period $n$ is repelling if $\| (\tilde{f}^{n})'(\om ,z)\| >1$.    
\item $\pi _{2}(J(\tilde{f}))=J(G).$ 
\end{enumerate}
\end{rem} 
\begin{dfn}[\cite{sumi2}]
A finitely generated 
rational semigroup $G=\langle f_{1},\ldots ,f_{s}\rangle $ 
is said to be expanding provided that 
$J(G)\ne \es $ and the skew product map 
$\tilde{f}:\Sg_{s} \times \oc \rightarrow \Sg _{s}\times \oc $ 
associated with 
$f=(f_{1},\ldots ,f_{s}) $ is expanding along 
fibers of the Julia set $J(\tilde{f})$, 
meaning that there exist $\eta >1$ and 
$C\in(0,1]$ such that for all $n\ge 1$,
\begin{equation}
\label{1112505}
\inf \{ \| (\tilde{f}^{n})'(z)\|: z\in J(\tilde{f})\} \ge C\eta ^{n}.   
\end{equation}  
\end{dfn} 
\begin{dfn}
Let $G$ be a rational semigroup. 
We put
$$
P(G):=\overline{\cup _{g\in G}\{ \mbox{all critical values of }
g:\oc \rightarrow \oc \}} \ (\subset \oc )
$$ 
and we call $P(G)$ the {\bf postcritical set} of $G$. 
A rational semigroup $G$ is said to be {\bf hyperbolic} if 
$P(G)\subset F(G).$ 
\end{dfn}
\begin{dfn}
Let $G$ be a polynomial semigroup. 
We set $P^{\ast }(G):= P(G)\setminus \{ \infty \} .$ 
We say that $G$ is postcritically bounded if $P^{\ast }(G)$ is bounded in $\C .$ 
\end{dfn}
\begin{rem}
\label{exphyplem}
Let $G=\langle f_{1},\ldots ,f_{s}\rangle $ be a rational semigroup 
such that 
there exists an element $g\in G$ with $\deg (g)\geq 2$ and 
such that each M\"{o}bius transformation in $G$ is loxodromic. 
Then,  
it was proved in \cite{sumihyp2} that 
$G$ is expanding if and only if $G$ is hyperbolic. 
\end{rem}
\begin{dfn}
We define
$$\Exp (s):=\{ (f_{1},\ldots ,f_{s})\in (\mbox{{\em Rat}})^{s}
\mid \langle f_{1},\ldots ,f_{s}\rangle \mbox{ is expanding} \}.
$$  
We also set $\Sigma _{s}^{\ast }:= \cup _{j=1}^{\infty }
\{ 1,\ldots ,s\} ^{j}$ (disjoint union). 
For every 
$\om\in\Sg_s\cup\Sg_s^*$ let $|\om|$ be the length of $\om .$
For each $f=(f_{1},\ldots ,f_{s})\in (\mbox{{\em Rat}})^{s}$ 
and each $\om =(\om _{1},\ldots ,\om _{n})\in \Sigma _{s}^{\ast }$,  
we put $f_{\om }:= f_{\om _{n}}\circ \cdots \circ f_{\om _{1}}.$ 
\end{dfn}
Then we have the following.
\begin{lem}[\cite{sumihyp1, suetds1}]
\label{expopenlem}
$\Exp(s)$ is an open subset of {\em (Rat}$)^{s}.$ 
\end{lem}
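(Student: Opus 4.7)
The plan is to propagate the uniform expansion on $J(\tilde{f})$ first to a neighborhood $V\supset J(\tilde{f})$ in $\Sigma_{s}\times\oc$, then by $C^{1}$-continuity in the parameter $(f_{1},\ldots,f_{s})$ to the perturbed skew products, and finally to combine this with upper semi-continuity of $J(\tilde{g})$ at expanding parameters.

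Fix $f=(f_{1},\ldots,f_{s})\in \Exp(s)$ with constants $C,\eta$ from \eqref{1112505}, and choose $N\in\N$ so that $C\eta^{N}\ge 4$. Then $\|(\tilde{f}^{N})'(x)\|\ge 4$ on the compact set $J(\tilde{f})$. There are only finitely many length-$N$ words $\tau\in\{1,\ldots,s\}^{N}$, and for each, the spherical derivative of $f_{\tau_{N}}\circ\cdots\circ f_{\tau_{1}}$ is continuous on $\oc$, so by compactness there is an open neighborhood $V$ of $J(\tilde{f})$ in $\Sigma_{s}\times\oc$ on which $\|(\tilde{f}^{N})'\|\ge 3$. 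Moreover, the assignment $(h_{1},\ldots,h_{s})\mapsto h_{\tau_{N}}\circ\cdots\circ h_{\tau_{1}}$ is continuous from $(\mbox{Rat})^{s}$ into Rat in the $C^{1}$ topology (spherical), so there is an open neighborhood $\cW$ of $f$ in $(\mbox{Rat})^{s}$ such that for every $g\in\cW$ we have $\|(\tilde{g}^{N})'(x)\|\ge 2$ for all $x\in V$.

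The main obstacle is the set-theoretic step: showing that after possibly shrinking $\cW$, one has $J(\tilde{g})\subset V$ for every $g\in\cW$. For this I would use the standard upper semi-continuity of Julia sets at expanding (hyperbolic) parameters, adapted to the skew product setting. Concretely, the complement $K:=(\Sigma_{s}\times\oc)\setminus V$ lies in $F(\tilde{f})$. Because $\tilde{f}$ is expanding, the Fatou set $F(\tilde{f})$ has no indifferent periodic points and its fiberwise components are exhausted by basins of (super)attracting cycles of compositions $f_{\omega|_{k}}$; these basins are dynamically stable. Hence one can construct finitely many open fiberwise trapping regions $U_{i}\subset F(\tilde{f})$, together covering $K$, each satisfying a strict trapping condition of the form $\tilde{f}(\overline{U_{i}})\subset U_{j(i)}\subset F(\tilde{f})$. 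Such strict trappings are open conditions on $g\in(\mbox{Rat})^{s}$, so they persist on $\cW$ (after shrinking), and therefore $K\subset F(\tilde{g})$, equivalently $J(\tilde{g})\subset V$.

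Combining the two steps, $\|(\tilde{g}^{N})'\|\ge 2$ on $J(\tilde{g})$ for every $g\in\cW$. Using the chain rule along blocks of length $N$ (and the trivial spherical bound on one step to interpolate) gives constants $C'\in(0,1],\ \eta':=2^{1/N}>1$ such that $\inf\{\|(\tilde{g}^{n})'(x)\|:x\in J(\tilde{g})\}\ge C'(\eta')^{n}$ for all $n\ge 1$. Nonemptiness of $J(\langle g_{1},\ldots,g_{s}\rangle)=\pi_{2}(J(\tilde{g}))$ follows for free, because the repelling periodic orbits of $\tilde{f}$ guaranteed by Remark~\ref{rem1}(4) persist under small perturbation by the implicit function theorem. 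Hence $g\in\Exp(s)$, which proves that $\Exp(s)$ is open.
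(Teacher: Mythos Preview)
The paper gives no proof of this lemma; it is cited from \cite{sumihyp1,suetds1}, so there is no in-paper argument to compare against. Your overall scheme is the standard one, and the $C^{1}$-perturbation of the derivative bound, the chain-rule step, and the persistence of repelling periodic points are all fine.

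The part that does not go through as written is your justification of $J(\tilde g)\subset V$. The assertion that the fiberwise Fatou components ``are exhausted by basins of (super)attracting cycles of compositions $f_{\omega|_k}$'' is ill-posed: for non-periodic $\omega\in\Sigma_s$ the fibre dynamics have no cycles at all, so there is nothing to be a basin of, and no Sullivan-type classification hands you the finitely many sets $U_i$ with $\tilde f(\overline{U_i})\subset U_{j(i)}$ that you assert. A cleaner route, essentially that of the cited references, runs the stability argument on $\oc$ rather than on $\Sigma_s\times\oc$: from expansion one builds nested open sets $J(G)\subset W_1\subset\overline{W_1}\subset W_2$, disjoint from $P(G)$, with $\bigcup_{j}f_j^{-1}(\overline{W_2})\subset W_1$ (a repeller neighbourhood in an adapted expanding metric). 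These finitely many strict inclusions are open conditions on $(f_1,\ldots,f_s)$; they persist for nearby $g=(g_1,\ldots,g_s)$, forcing $\oc\setminus\overline{W_1}$ to be forward invariant under every $g_j$ and hence, by Montel (since $\overline{W_1}\supset J(G)$ contains at least three points), contained in $F(G')$ for $G'=\langle g_1,\ldots,g_s\rangle$. Together with the stable inclusion of the critical values of each $g_j$ in $\oc\setminus\overline{W_2}$ this gives $P(G')\subset F(G')$, so $G'$ is hyperbolic and, via Remark~\ref{exphyplem}, expanding. With this replacement for the trapping step your outline is complete.
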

\begin{dfn}
We set 
$$\Epb (s):= \{ f=(f_{1},\ldots ,f_{s})\in \Exp (s)\cap {\mathcal P}^{s}
\mid \langle f_{1},\ldots ,f_{s}\rangle \mbox{ is postcritically bounded} \} ,$$ 
\end{dfn}
\begin{lem}[\cite{sumid3, sumiprepare}]
\label{l:epbopen}
$\Epb (s)$ is open in ${\mathcal P}^{s}.$ 
\end{lem}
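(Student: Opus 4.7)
The plan is to show that for any $f=(f_1,\ldots,f_s)\in\Epb(s)$, a sufficiently small neighborhood of $f$ in $\mathcal{P}^s$ is still contained in $\Epb(s)$. By Lemma~\ref{expopenlem}, $\Exp(s)\cap\mathcal{P}^s$ is already open in $\mathcal{P}^s$, so it suffices to preserve the postcritical boundedness condition on a small neighborhood of $f$ in $\mathcal{P}^s$. Fix such an $f$. Since $\deg f_j\ge 2$ for every $j$, the semigroup $G_f:=\langle f_1,\ldots,f_s\rangle$ contains no M\"obius elements, and Remark~\ref{exphyplem} yields that $G_f$ is hyperbolic, so $P(G_f)\subset F(G_f)$; combined with $f\in\Epb(s)$, this makes $P^{\ast}(G_f)$ a compact subset of $F(G_f)\cap\C$.

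The key step is to construct a bounded open set $U\subset\C$ satisfying (i) $P^{\ast}(G_f)\cup V_f\subset U$, where $V_f:=\bigcup_{j=1}^{s}(\mathrm{CV}(f_j)\cap\C)$ is the finite set of critical values of the generators lying in $\C$; (ii) $\overline U$ is compact and contained in $F(G_f)\cap\C$; and (iii) $f_j(\overline U)\subset U$ for every $j=1,\ldots,s$. The natural candidate is a thickening of the bounded, forward-invariant ``semigroup filled-in set'' $\tilde K(G_f):=\{z\in\C:\sup_{g\in G_f\cup\{\mathrm{id}\}}|g(z)|<\infty\}$, which sits inside any closed disk $\{|z|\le R\}$ chosen so that $|f_j(z)|>2|z|$ holds for $|z|\ge R$ and every $j$. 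Turning the interior of $\tilde K(G_f)$ into an open neighborhood $U$ of $P^{\ast}(G_f)$ inside $F(G_f)\cap\C$ with the strict forward inclusion (iii) is the delicate point; it exploits hyperbolicity (which yields positive spherical distance between $P(G_f)$ and $J(G_f)$) together with the forward invariance $f_j(P(G_f))\subset P(G_f)$ of the postcritical set under each generator.

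Granted such a $U$, the conclusion follows from standard continuity. For $g=(g_1,\ldots,g_s)\in\mathcal{P}^s$ close enough to $f$: (a) the critical values of $g_j$ in $\C$ depend continuously on the coefficients of $g_j$, so $V_g:=\bigcup_j(\mathrm{CV}(g_j)\cap\C)\subset U$; and (b) since $f_j(\overline U)$ is a compact subset of the open set $U$ and $g_j\to f_j$ uniformly on the compact set $\overline U$, we also have $g_j(\overline U)\subset U$ for every $j$. By (b), $\overline U$ is forward invariant under every generator of $G_g:=\langle g_1,\ldots,g_s\rangle$; together with (a) and the elementary fact that every critical value in $\C$ of an element of $G_g$ has the form $h(v)$ with $h\in G_g\cup\{\mathrm{id}\}$ and $v\in V_g$, an induction on word length gives $\bigcup_{h\in G_g\cup\{\mathrm{id}\}}h(V_g)\subset\overline U$. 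Hence $P^{\ast}(G_g)\subset\overline U\subset\C$ is bounded. Combined with $g\in\Exp(s)$ from the opening reduction, this yields $g\in\Epb(s)$. The main obstacle is the construction of $U$ in the second paragraph; once $U$ is in place, the perturbation argument that follows is routine.
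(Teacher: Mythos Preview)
The paper does not give its own proof of this lemma; it is quoted from \cite{sumid3,sumiprepare} without argument. So there is no in-paper proof to compare your proposal against.

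Your plan is the natural one and the perturbation paragraph at the end is correct: once a bounded open $U\subset\C$ exists with $V_f\subset U$ and the \emph{strict} inclusion $f_j(\overline U)\subset U$ for every $j$, continuity of critical values and of the maps $g_j$ on the compact set $\overline U$ immediately yields $V_g\subset U$ and $g_j(\overline U)\subset U$ for all nearby $g$, whence $P^{\ast}(G_g)\subset\overline U$. Two comments on the part you flag as the obstacle. First, you do not need the full strength of (ii): the inclusion $\overline U\subset F(G_f)$ plays no role in the conclusion, since hyperbolicity of $G_g$ already comes from $g\in\Exp(s)$; all you need is that $\overline U$ be compact in $\C$. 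Second, your candidate $\mathrm{int}\,\tilde K(G_f)$ is forward invariant, but forward invariance alone does not give the \emph{strict} inclusion $f_j(\overline U)\subset U$ that the perturbation step requires; this is precisely where the expanding hypothesis must be used, and your proposal does not make that use explicit. One clean route (essentially the content of the references the paper cites) is to pass through the adapted conformal metric $\rho$ on a neighbourhood $V$ of $J(G_f)$ in which every generator expands by a factor $\lambda>1$ (this metric is what underlies the equivalence hyperbolic $\Leftrightarrow$ expanding in \cite{sumihyp1,sumihyp2}); the inverse branches then contract in $\rho$, giving $f_j^{-1}(\overline{V_\epsilon})\subset V_{\epsilon/\lambda}$ for small $\rho$-neighbourhoods $V_\epsilon$ of $J(G_f)$, and one extracts from the complement $\oc\setminus\overline{V_\epsilon}$ a bounded piece in $\C$ that traps $P^{\ast}(G_f)$ and satisfies (iii). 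Your identification of this construction as the crux is accurate.
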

\begin{dfn}
\label{d:Bpara}
Let $f=(f_{1},\ldots ,f_{s})\in \Exp(s)$ and 
let $\tilde{f}:\Sigma _{s}\times \oc \rightarrow 
\Sigma _{s}\times \oc $ be the skew product map 
associated with $f=(f_{1},\ldots ,f_{s}) .$ 
For each $t\in \R $, 
let $P(t,f)$ be the topological pressure of 
the potential $\varphi(z):= -t\log \| \tilde{f}'(z)\|$ 
with respect to the map 
$\tilde{f}:J(\tilde{f})\rightarrow J(\tilde{f}).$  
(For the definition of the topological pressure, 
see \cite{pubook}.) 
We denote by $\delta (f)$ the unique zero 
of $t\mapsto P(t,f).$ (Note that the existence and 
the uniqueness of the zero of $P(t,f)$ was shown in 
\cite{sumi2}.) The number $\delta (f)$ is called the 
{\bf Bowen parameter} of the semigroup $f=(f_{1},\ldots ,f_{s})\in 
\Exp(s).$  
\end{dfn}

We have the following  fact, which is one of the main results of \cite{suetds1}. 

\begin{thm}[\cite{suetds1}]
The function $\Exp(s)\ni f\mapsto \delta (f)\in\R$ is real-analytic and plurisubharmonic.  
\end{thm}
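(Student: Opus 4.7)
The plan is to apply Ruelle's thermodynamic formalism and Kato's analytic perturbation theory to obtain real-analyticity, and to extract plurisubharmonicity from the pluriharmonic dependence of the transfer-operator weights on $f$.

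First I would introduce the Ruelle transfer operator
\[
\mathcal{L}_{t,f}\phi(\omega,z)\;=\;\sum_{\tilde{f}(\omega',z')=(\omega,z)}\|\tilde{f}'(\omega',z')\|^{-t}\,\phi(\omega',z')
\]
acting on a H\"older function space over $J(\tilde{f})$. Since $\Exp(s)$ is open (Lemma~\ref{expopenlem}) and expanding hyperbolic sets are structurally stable under holomorphic perturbation, $J(\tilde{f})$ admits a holomorphic motion in a neighborhood of any $f_{0}\in\Exp(s)$; conjugating by this motion gives a real-analytic family $\widehat{\mathcal{L}}_{t,f}$ of operators on a fixed Banach space. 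By the Ruelle--Perron--Frobenius theorem, applicable by the topological exactness recorded in Remark~\ref{rem1}(3), $e^{P(t,f)}$ is a simple isolated leading eigenvalue of $\widehat{\mathcal{L}}_{t,f}$, and Kato's perturbation theory then yields joint real-analyticity of $(t,f)\mapsto P(t,f)$. The derivative formula $\partial P/\partial t=-\int\log\|\tilde{f}'\|\,d\mu_{t}$, together with the expanding condition~(\ref{1112505}) which forces the integral to be strictly positive, gives $\partial P/\partial t<0$. The implicit function theorem applied to $P(\delta(f),f)=0$ then produces the real-analytic function $\delta:\Exp(s)\to\R$.

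For plurisubharmonicity, the key observation is that for each fixed $(\omega,z)\in\Sigma_{s}\times\oc$ the derivative $\tilde{f}'(\omega,z)=f_{\omega_{1}}'(z)$ is holomorphic in the coefficients of $f_{\omega_{1}}$, so that $\log|\tilde{f}'(\omega,z)|$ is pluriharmonic in $f$ (the spherical-metric correction is bounded and, modulo a cocycle identity, controllable). Periodic points of $\tilde{f}$ also move holomorphically in $f$ by the implicit function theorem applied to $\tilde{f}^{n}(x)=x$ together with expandingness. Therefore the finite Bowen sums
\[
S_{n}(t,f)\;:=\;\sum_{\tilde{f}^{n}(x)=x}\|(\tilde{f}^{n})'(x)\|^{-t}
\]
are, up to the bounded spherical correction, sums of exponentials of pluriharmonic functions of $f$, so $\tfrac{1}{n}\log S_{n}(t,\cdot)$ is PSH in $f$ for each fixed $t$. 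The classical periodic-orbit limit $P(t,f)=\lim_{n\to\infty}\tfrac{1}{n}\log S_{n}(t,f)$, valid for topologically exact expanding systems, exhibits $P(t,\cdot)$ as a continuous limit of PSH functions, and hence as PSH on $\Exp(s)$. Plurisubharmonicity of $\delta$ is then obtained by differentiating $P(\delta(f),f)=0$ twice along an arbitrary complex line $\zeta\mapsto f_{\zeta}$ in $f$ and reading off
\[
-P_{t}\cdot\partial_{\zeta}\partial_{\bar\zeta}\delta \;=\; P_{tt}|\delta_{\zeta}|^{2}+2\mathrm{Re}(P_{t\bar\zeta}\delta_{\zeta})+P_{\zeta\bar\zeta};
\]
since $-P_{t}>0$, the right-hand side is a Hermitian quadratic form in $\delta_{\zeta}$, and its nonnegativity reduces to positive semidefiniteness of the mixed Hermitian Hessian matrix of $P$ in $(t,\zeta)$.

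The main obstacle is this last joint-positivity step: separate PSH-in-$f$ and convexity-in-$t$ of $P$ do not formally imply the required Hermitian inequality, and one must therefore refine the periodic-orbit argument by allowing $t$ itself to vary in a complex neighborhood of the real axis --- where the transfer operator depends holomorphically on $t$ --- in order to obtain PSH of the complexified pressure in the pair $(t,f)$ jointly. A secondary technical point is that the spherical-metric correction to $\log\|\tilde{f}'\|$ is only PSH in $f$, not pluriharmonic, so one has to verify that it contributes a subleading PSH term which preserves rather than destroys the Hermitian inequality above.
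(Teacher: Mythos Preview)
The present paper does not contain a proof of this theorem: it is quoted, with attribution \cite{suetds1}, as one of the main results of the authors' earlier paper \emph{Real Analyticity of Hausdorff Dimension for Expanding Rational Semigroups}. There is therefore no proof here against which to compare your proposal.

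That said, your outline for real-analyticity is the standard and correct route, and is essentially the strategy carried out in \cite{suetds1}: holomorphic motion of $J(\tilde f)$ over a neighbourhood in $\Exp(s)$, conjugation of the transfer operators to a fixed H\"older space, simplicity and isolation of the leading eigenvalue via Ruelle--Perron--Frobenius, analytic perturbation of that eigenvalue, and then the implicit function theorem using $\partial P/\partial t<0$.

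For plurisubharmonicity you have correctly located the genuine difficulty. The step from ``$P(\cdot,f)$ convex in $t$'' and ``$P(t,\cdot)$ PSH in $f$'' to the required mixed Hermitian inequality for $\delta$ does not follow formally, as you note; and your periodic-orbit argument as written only delivers the two separate pieces. Your proposed fix --- complexifying $t$ and using holomorphic dependence of the transfer operator on the complex parameter $(t,f)$ to get plurisubharmonicity of a suitable pressure-type quantity jointly in $(t,f)$ --- is indeed the idea that makes the argument go through in \cite{suetds1}; but as stated here it is only a hint, not a proof. One also has to be careful that after complexifying $t$ the operator may fail to be positive, so what persists analytically is the leading eigenvalue (via Kato), and one works with $\log|\lambda(t,f)|$ rather than a variational pressure. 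Your remark about the spherical-metric correction is less of an issue than you suggest: it enters as a coboundary under $\tilde f$ and drops out of the periodic-orbit sums and of all invariant-measure integrals, so it does not affect either convexity or PSH.
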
 
\begin{dfn}
For a subset $A$ of $\oc $, we denote by $\HD (A)$ the Hausdorff dimension of 
$A$ with respect to the spherical metric. 
For a Riemann surface $S$, we denote by $\mbox{{\em Aut}}(S)$ the set of all 
holomorphic isomorphisms of $S.$ 
For a compact metric space $X$, we denote by $C(X)$ the space of all continuous 
complex-valued functions on $X$, endowed with the supremum norm. 
\end{dfn}

\section{Results}
In this section, we prove our main results.
Note that for any $f=(f_{1},\ldots ,f_{s})\in \Exp(s)$, by
Remark~\ref{rem1}, \cite{pubook}, and \cite{hiroki1},   
there exists a unique maximal entropy measure $\mu $ for
$\tilde{f}:J(\tilde{f})\rightarrow J(\tilde{f})$ 
and $h_{\mu }(\tilde{f})=h(\tilde{f})=\log (\deg (\tilde{f})).$ 
 We start with the following.
\begin{thm}
\label{t:main1} 
Let $f=(f_{1},\ldots ,f_{s})\in \Exp(s).$
Let $d:=\deg (\tilde{f})$ and
let $d_{j}=\deg (f_{j})$ for each $j=1,\ldots ,s.$ 
Let $\mu $ be the maximal entropy measure for
$\tilde{f}:J(\tilde{f})\rightarrow J(\tilde{f})$.    
Then the following statements {\em (1)} and {\em (2)} hold.  
\begin{itemize}
\item[(1)] 
$$\delta (f)\geq \frac{\log d}{\int _{J(\tilde{f})} \log \| \tilde{f}'\| d\mu }.$$ 
\item[(2)] 
Suppose $d_{1}\geq 2.$  
If 
$$\delta (f)= \frac{\log d}{\int _{J(\tilde{f})} \log \| \tilde{f}'\| d\mu },$$
then, the following items {\em (a),(b),(c)} hold.
\begin{itemize}
\item[(a)] 
\sp $d_{1}=\cdots =d_{s}.$ We set $d_{0}:=d_{1}=\cdots =d_{s}.$ 
\item[(b)] 
\sp There exist an automorphism $\varphi \in \mbox{{\em Aut}}(\oc )$ and complex
numbers $a_{1},\ldots , a_{s}$ with $a_{1}=1$  
such that for each $j=1,\ldots ,s$,  
$$
\varphi f_{j}\varphi ^{-1}(z)=a_{j}z^{\pm d_{0}}. 
$$
\item[(c)] 
\sp $\delta (f)=1+\frac{\log s}{\log d_{0}}.$ 
\end{itemize}
\end{itemize} 
\end{thm}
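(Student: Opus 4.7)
The proof splits according to the two statements. Part (1) is a direct application of the variational principle for topological pressure; part (2) reduces, via Zdunik's rigidity theorem applied along suitable invariant fibers, to classifying the generators.

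For (1), the variational principle (\cite{pubook}) gives
$$
P(t,f)\ \ge\ h_{\nu }(\tilde{f})\ -\ t\int \log \|\tilde{f}'\|\,d\nu
$$
for every $\tilde{f}$-invariant Borel probability measure $\nu $ on $J(\tilde{f})$. Applying this with $\nu =\mu $ (so that $h_{\mu }(\tilde{f})=\log d$, by Remark~\ref{rem1} and \cite{pubook, hiroki1}) and $t=\delta (f)$ (so that $P(\delta (f),f)=0$, by Definition~\ref{d:Bpara}) yields
$$
0\ \ge\ \log d\ -\ \delta (f)\int \log \|\tilde{f}'\|\,d\mu ,
$$
which is precisely \eqref{eq:mainineq}.

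For (2), suppose equality holds in \eqref{eq:mainineq}. Then $\mu $ is an equilibrium state for the H\"older potential $-\delta (f)\log \|\tilde{f}'\|$. Since $\tilde{f}$ is fiberwise expanding, standard thermodynamic formalism for expanding skew products (see \cite{sumi2, suetds1}) yields a unique equilibrium state $\mu _{\delta (f)}$ for this potential, equivalent to the $\delta (f)$-conformal measure on $J(\tilde{f})$. Consequently $\mu =\mu _{\delta (f)}$: the measure of maximal entropy coincides with the measure of maximal dimension. I would then transfer this coincidence to each generator $f_{j}$ with $d_{j}\ge 2$. For the constant symbol sequence $\omega _{j}:=(j,j,j,\ldots )\in \Sigma _{s}$ the fiber $\{\omega _{j}\}\times J(f_{j})\subset J(\tilde{f})$ is $\tilde{f}$-invariant, with $\tilde{f}$ restricting to $f_{j}$ on it. By disintegrating $\mu =\mu _{\delta (f)}$ along $\pi _{1}$ and invoking the Abramov--Rokhlin entropy formula, the conditional measure along this fiber is simultaneously the measure of maximal entropy for $f_{j}\colon J(f_{j})\to J(f_{j})$ and an equilibrium state for $-\delta (f)\log |f_{j}'|$. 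Since $G$ is expanding, $f_{j}$ itself is an expanding rational map with $J(f_{j})\neq \oc$ (see Remark~\ref{exphyplem}), so the Latt\`es alternative in Zdunik's theorem is excluded, and \cite{zdunik2} produces a M\"obius transformation $\varphi _{j}$ and a constant $a_{j}\ne 0$ with $\varphi _{j}f_{j}\varphi _{j}^{-1}(z)=a_{j}z^{\pm d_{j}}$.

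The main obstacle is to upgrade the individual $\varphi _{j}$'s to a single M\"obius $\varphi $ and to show that all degrees coincide. For the former, I would use backward self-similarity \eqref{bsseq}: in the $\varphi _{j}$-coordinate, $J(f_{j})$ is a round circle, and $J(G)=\bigcup _{j}f_{j}^{-1}(J(G))$ forces these round circles to coincide in $\oc$; the pair $\{0,\infty \}$ of superattracting fixed points of $a_{j}z^{\pm d_{j}}$ in the $\varphi _{j}$-coordinate therefore determines a canonical pair of points on $\oc $ common to all $f_{j}$'s, so a single M\"obius $\varphi $ simultaneously conjugates every $f_{j}$ into the required normal form; rescaling so that $a_{1}=1$ gives (b). For the latter, in this common coordinate the base projection of $\mu $ onto $\Sigma _{s}$ is the Bernoulli measure with weights $d_{j}/d$, while the base projection of $\mu _{\delta (f)}$ is the Gibbs measure whose $j$-th weight is determined by the (constant) value of $\|f_{j}'\|^{-\delta (f)}$ along the invariant circle; equating these base projections forces $d_{1}=\cdots =d_{s}=:d_{0}$, which is (a). Finally, (c) follows by substituting $\chi _{\mu }(\tilde{f})=\log d_{0}$ and $h_{\mu }(\tilde{f})=\log (sd_{0})$ into the equality hypothesis, giving $\delta (f)=\log (sd_{0})/\log d_{0}=1+\log s/\log d_{0}$.
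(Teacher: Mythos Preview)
Your argument for (1) is correct and essentially the same as the paper's (the paper phrases it via convexity of $t\mapsto P(t,f)$ and the derivative at $t=0$, but this is the variational principle in disguise). Likewise, your deduction at the start of (2) that $\mu$ coincides with the equilibrium state $\mu_{\delta(f)}$ is correct and matches the paper's conclusion $\tau=\mu$.

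The gap is in how you pass from $\mu=\mu_{\delta(f)}$ on $J(\tilde f)$ to a Zdunik-type conclusion for each $f_j$. The constant sequence $\omega_j$ is a single point of the base, hence has $(\pi_1)_*\mu$-measure zero; disintegration and Abramov--Rokhlin only give statements for $\tau$-a.e.\ fiber, so you cannot read off anything about the fiber over $\omega_j$ this way. Even granting a continuous disintegration $\omega\mapsto\mu_\omega$ on the max-entropy side, there is no general principle saying the fiber conditional of the $\delta(f)$-conformal (or Gibbs) measure over $\omega_j$ is an equilibrium state for $-\delta(f)\log|f_j'|$ on $J(f_j)$. The paper circumvents this entirely: from $\mu=\alpha\nu$ it derives the \emph{pointwise} cohomological identity
\[
\log\|\tilde f'(z)\|=\tfrac{1}{\delta(f)}\bigl(\log\alpha(z)-\log\alpha(\tilde f(z))+\log d\bigr)
\quad\text{for every }z\in J(\tilde f),
\]
using continuity of $\alpha$ and full support of $\nu$. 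This identity then restricts to \emph{every} word $w\in\Sigma_s^*$, forcing the pressure $t\mapsto p(t,w)$ to be linear, whence Zdunik applies to every $f_w$ with $\deg f_w\ge 2$, not just to the generators.

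This stronger conclusion is exactly what makes the common-$\varphi$ step work. Your claim that backward self-similarity ``forces these round circles to coincide'' is not justified: the inclusions $J(f_j)\subset J(G)$ and $J(G)=\bigcup_j f_j^{-1}(J(G))$ place no a priori constraint relating the circles $J(f_i)$ and $J(f_j)$. The paper instead fixes $\varphi$ conjugating $f_1$ to $z^{\pm d_1}$ and observes that, since $f_1\circ f_j$ (or $f_1\circ f_j\circ f_1$) is \emph{also} conjugate to a power map by the previous paragraph, it can have at most two critical points; counting preimages of $\{0,\infty\}$ under $\varphi f_j\varphi^{-1}$ then forces $(\varphi f_j\varphi^{-1})^{-1}(\{0,\infty\})=\{0,\infty\}$, i.e.\ $\varphi f_j\varphi^{-1}(z)=a_jz^{\pm d_j}$. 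Finally, the degree equality $d_1=\cdots=d_s$ in the paper comes directly from the cohomological identity (each $f_j$ has Lyapunov exponent exactly $\log d/\delta(f)$), whereas your base-projection argument presupposes the common conjugacy and the constancy of $\|f_j'\|$ on the circle that you have not yet established.
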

\begin{proof} 
We have that 
$\R\ni t\mapsto P(t,f)$ is convex and real-analytic (\cite{sumi2}, \cite{suetds1}). 
Also,
$$
\frac{\partial P(t,f)}{\partial t}|_{t=0}=-\int _{J(\tilde{f})}\log \|
\tilde{f}'\| d\mu.
$$ 
From the convexity of $P(t,f)$, we obtain that
$$
\delta(f) \geq  \frac{\log d}{\int _{J(\tilde{f})} \log \|
  \tilde{f}'\| d\mu }.
$$
We now assume that $d_{1}\geq 2$ and 
$$\
\delta (f)= \frac{\log d}{\int _{J(\tilde{f})} \log \| \tilde{f}'\|
  d\mu }. 
$$ 
Because of the convexity of $P(t,f)$ again, we infer that
$$
\frac{\partial P(t,f)}{\partial t}=-\int _{J(\tilde{f})}\log \|
\tilde{f}'\| d\mu 
$$ 
for all $t\in \R .$ 
Let $\nu $ be the unique $\delta (f)$-conformal measure on $J(\tilde{f})$ for 
$(\tilde{f}, J(\tilde{f}))$ (see \cite{sumi2}). 
Let 
$$
L_{\nu }: C(J(\tilde{f}))\rightarrow C(J(\tilde{f}))
$$ 
be the operator, called the transfer operator, defined by the following formula
$$
L_{\nu }(\varphi )(z)=\sum _{\tilde{f}(y)=z}\varphi (y)\|
\tilde{f}'(y)\| ^{-\delta (f)}.
$$
In virtue of \cite{sumi2}, the limit $\alpha := \lim _{l\rightarrow \infty
}L_{\nu }^{l}(1)\in C(J(\tilde{f}))$ exists,  
where $1$ denotes the constant function taking its only value $1.$ Let
$\tau := \alpha \nu .$         Then 
$$
-\int _{J(\tilde{f})}\log \| \tilde{f}'\| d\tau = 
\frac{\partial P(t,f)}{\partial t}|_{t=\delta (f)}=-\int
_{J(\tilde{f})}\log \| \tilde{f}'\| d\mu .
$$  
Thus 
$$
\int _{J(\tilde{f})}\log \| \tilde{f}'\| d\tau =\int
_{J(\tilde{f})}\log \| \tilde{f}'\| d\mu .
$$ 
Since 
$$
\delta (f)=\frac{h_{\tau }(\tilde{f})}{\int _{J(\tilde{f})}\log
  \| \tilde{f}'\| d\tau } 
$$ 
(see \cite{sumi2}),  
it follows that $h_{\tau }(\tilde{f})=\log d.$ By the uniqueness of maximal entropy measure 
of $(\tilde{f}, J(\tilde{f}))$, 
we obtain that 
\begin{equation}
\label{eq:taumu}
\tau =\mu .
\end{equation}
Let $L_{\mu }:C(J(\tilde{f}))\rightarrow C(J(\tilde{f}))$ be the operator 
defined as follows 
$$
L_{\mu }(\varphi )(z)=\frac{1}{d}\sum _{\tilde{f}(y)=z}\varphi (y).
$$ 
Since $L_{\mu }^{\ast }(\mu )=\mu $, (\ref{eq:taumu}) implies that 
$L_{\mu }^{\ast }(\alpha \nu )=\alpha \nu .$ Thus, 
for any open subset $A$ of $J(\tilde{f})$ such that 
$\tilde{f}:A\rightarrow \tilde{f}(A)$ is injective, 
if $B$ is a Borel subset of $A$, then 
$(\alpha \nu )(\tilde{f}(B))=\int _{B}d\ d(\alpha \nu ).$ 
Moreover, we have 
\begin{align*}
(\alpha \nu )(\tilde{f}(B))
&= \int _{\tilde{f}(B)}\alpha d\nu \\ 
& =\int _{\tilde{f}(B)}(\alpha \circ \tilde{f})\circ (\tilde{f}|_{A})^{-1} d\nu \\ 
& =\int _{B}\alpha\circ \tilde{f}\ d((\tilde{f}|_{A}^{-1})_{\ast }\nu )\\ 
& =\int _{B}\alpha \circ \tilde{f}\cdot \frac{d((\tilde{f}|_{A}^{-1})_{\ast }\nu )}{d\nu } d\nu \\ 
& =\int _{B}(\alpha \circ \tilde{f})\cdot \| \tilde{f}'\|^{\delta (f)} d\nu .
\end{align*}
Thus $(\alpha \circ \tilde{f}(z))\cdot \| \tilde{f}'(z)\| ^{\delta (f)}=\alpha (z)d$ for 
$\nu $-a.e. $z\in J(\tilde{f}).$ 
Since supp$\, \tau =J(\tilde{f})$ (see \cite{sumi2}), 
it follows that 
\begin{equation}
\label{eq:atffdd1}
(\alpha \circ \tilde{f}(z))\cdot \| \tilde{f}'(z)\| ^{\delta (f)}=\alpha (z)d \ \ 
\mbox{ for every } z\in J(\tilde{f}). 
\end{equation}
Hence 
\begin{equation}
\label{eq:atffdd2}
\log \| \tilde{f}'(z)\| =\frac{1}{\delta (f)}(\log \alpha (z)-
\log \alpha \circ \tilde{f}(z)+\log d )\ \ 
\mbox{ for every } z\in J(\tilde{f}). 
\end{equation}
Therefore, for each $w\in \Sigma _{s}^{\ast }$ there exists a continuous 
function $\alpha _{w}: J(f_{w})\rightarrow \R $ such that 
\begin{equation}
\label{eq:atffddw}
\log \| f_{w}'(z)\| =\frac{1}{\delta (f)}(\log \alpha _{w}(z)-
\log \alpha _{w}\circ f_{w}(z)+|w|\log d )\ \ 
\mbox{ for every } z\in J(f_{w}). 
\end{equation}
Thus, for each $f_{w}$-invariant Borel probability measure $\beta $ on 
$J(f_{w})$, we have 
$$\int _{J(f_{w})}\log \| f_{w}'\| d\beta =|w|\frac{\log d}{\delta (f)}.$$ 
Let $p(t,w)$ be the topological pressure of $f_{w}:J(f_{w})\rightarrow J(f_{w})$ 
with respect to the potential function $-t\log \| f_{w}'\| .$ 
It follows that for each $w\in \Sigma _{s}^{\ast }$ with $\deg (f_{w})\geq 2$, 
\begin{equation}
\label{eq:ptwlin}
\frac{\partial p(t,w)}{\partial t} 
=-|w|\frac{\log d}{\delta (f)} \ \  \  \  \mbox{for each } t\in \R .
\end{equation}  
In particular, $t\mapsto p(t,w)$ is linear. 
Hence, 
$$\HD (J(f_{w}))=\frac{\log (\deg (f_{w}))}{\int _{J(f_{w})} \log \| f_{w}'\| d\mu _{w}},$$
where $\mu _{w}$ denotes the maximal entropy measure for $f_{w}: J(f_{w})\rightarrow J(f_{w}).$ 
Therefore, by Zdunik's theorem (\cite{zdunik2}),  
it follows that for each $w\in \Sigma _{s}^{\ast }$ with $\deg (f_{w})\geq 2$, there exists 
an $n_{w}\in \Z \setminus \{ 0,\pm 1\} $ and an element $\psi _{w}\in \mbox{Aut}(\oc )$ such that 
\begin{equation}
\label{eq:fwzd}
\psi _{w}\circ f_{w}\circ \psi _{w}^{-1}(z)=z^{n_{w}}\ \  \mbox{for every }z\in \oc .
\end{equation} 
In particular, there exists an element $\varphi \in \mbox{Aut}(\oc )$ such that 
$\varphi \circ f_{1}\circ \varphi ^{-1}(z)=z^{\pm d_{1}}$ for each $z\in \oc .$ 
Suppose that there exists a $j\in \{ 1,\ldots ,s\} $ such that 
$(\varphi \circ f_{j}\circ \varphi ^{-1})^{-1}(\{ 0,\infty \} )\neq \{ 0,\infty \} .$ 
If $d_{j}\geq 2$, then 
since each point of 
$(\varphi \circ f_{j}\circ \varphi ^{-1})^{-1}(\{ 0,\infty \} )$ is a critical point of 
$\varphi \circ f_{1}\circ f_{j}\circ \varphi ^{-1}$ and 
$\sharp (\varphi \circ f_{j}\circ \varphi ^{-1})^{-1}(\{ 0,\infty \} )\geq 3$,  
it contradicts (\ref{eq:fwzd}). 
If $d_{j}=1$, then since each point of 
$A:=(\varphi \circ f_{1}\circ \varphi ^{-1})^{-1}
((\varphi \circ f_{j}\circ \varphi ^{-1})^{-1}(\{ 0,\infty \} ))$ is a critical point of 
$\varphi \circ f_{1}\circ f_{j}\circ f_{1}\circ \varphi ^{-1}$ and $\sharp A\geq 3$, 
it contradicts (\ref{eq:fwzd}) again. 
Therefore, for each $j\in \{ 1,\ldots ,s\} $, 
\begin{equation}
\label{eq:vfv-1az}
\varphi \circ f_{j}\circ \varphi ^{-1}(z)=a_{j}z^{\pm d_{j}}
\end{equation}
 for some $a_{j}\in \C \setminus \{ 0\} .$ 
Since $G$ is expanding and $d_{1}\geq 2$, it follows that $d_{j}\geq 2$ for each $j=1,\ldots ,s.$ 
By (\ref{eq:ptwlin}) and (\ref{eq:vfv-1az}), 
 it follows that for each $j$, 
$$ \log d_{j}=\int _{J(f_{j})}\log \| f_{j}'\| d\mu _{j}=\frac{\log d}{\delta (f)}.$$ 
Therefore, $d_{1}=\cdots =d_{s}.$ 
Thus, we have completed the proof.   
\end{proof} 
Regarding Theorem~\ref{t:main1}, 
we give several remarks. 
In order to relate the Bowen parameter to the geometry of the Julia
set we need the concept of the  open set condition. We define it now.
\begin{dfn}
\label{d:osc}
Let $f=(f_{1},\ldots ,f_{s})\in (\mbox{{\em Rat}})^{s}$
and let $G=\langle f_{1},\ldots ,f_{s}\rangle $.  
Let also $U$ be a non-empty open set in $\oc .$ 
We say that $f$ (or $G$) satisfies the open set condition (with $U$) 
if 
$$
\cup _{j=1}^{s}f_{j}^{-1}(U)\subset U \  \text{  and } \
f_{i}^{-1}(U)\cap f_{j}^{-1}(U)=\emptyset 
$$ 
for each $(i,j)$ with $i\neq j.$ 
There is also a stronger condition. Namely, we say that 
$f$ (or $G$) satisfies the separating open set condition 
(with $U$) if 
$$
\cup _{j=1}^{s}f_{j}^{-1}(U)\subset U \  \text{  and } \
f_{i}^{-1}(\overline{U})\cap f_{j}^{-1}(\overline{U})=\emptyset 
$$ 
for each $(i,j)$ with $i\neq j.$ 
\end{dfn}
We remark that the above concept of ``open set condition'' 
(for ``backward IFS's'') is an analogue 
of the usual open set condition in the theory of IFS's. 

We introduce two other analytic invariants.  
\begin{dfn}[\cite{sumi2}]
Let $G$ be a countable rational semigroup. 
For any $t\geq 0$ and $z\in \oc $, we 
set 
$$S_{G}(z,t):=\sum _{g\in G}\sum _{g(y)=z}\| g'(y)\| ^{-t}
$$ 
counting multiplicities. We also set 
$$
S_{G}(z):= \inf \{ t\geq 0: S_{G}(z,t)<\infty \} 
$$ 
(if no $t$ exists with $S_{G}(z,t) <\infty $, then we set 
$S_{G}(z):=\infty $). Furthermore, 
we put 
$$
s_{0}(G):= \inf \{ S_{G}(z): z\in \oc \}
$$  
The number $s_{0}(G)$ is called the {\bf critical exponent of the 
Poincar\'{e} series} of $G.$ 
\end{dfn}
\begin{dfn}[\cite{sumi2}]
Let $f=(f_{1},\ldots ,f_{s})\in (\mbox{{\em Rat}})^{s}$, $t\geq 0$, and $z\in \oc .$
 We put
$$
T_{f}(z,t):=\sum _{\om \in \Sigma _{s}^{\ast}}
\sum _{f_{\om }(y)=z}\| f_{\om }'(y)\| ^{-t},
$$ 
counting multiplicities. Moreover, we set 
$$
T_{f}(z):=\inf \{ t\geq 0:T_{f}(z,t)<\infty \} 
$$ 
(if no $t$ exists with $T_{f}(z,t)<\infty $, then we set 
$T_{f}(z)=\infty $). 
Furthermore, we set 
$$
t_{0}(f):= \inf \{ T_{f}(z): z\in \oc \}.
$$  
The number $t_{0}(f)$ is called the {\bf critical exponent of 
the Poincar\'{e} series} of $f=(f_{1},\ldots ,f_{s})\in 
(\mbox{{\em Rat}})^{s}.$ 
\end{dfn}
\begin{rem}
\label{strem}
Let $f=(f_{1},\ldots ,f_{s})\in (\mbox{{\em Rat}})^{s}$, $t\geq 0$ , 
$z\in \oc $ and let 
$G=\langle f_{1},\ldots ,f_{s}\rangle .$ 
Then, 
$S_{G}(t,z)\leq T_{f}(t,z), S_{G}(z)\leq T_{f}(z),$ and 
$s_{0}(G)\leq t_{0}(f).$ Note that 
 for almost every $f\in (\mbox{{\em Rat}})^{s}$ with 
 respect to the Lebesgue measure, 
 $G=\langle f_{1},\ldots ,f_{s}\rangle $ is a free 
 semigroup and so we have 
 $S_{G}(t,z)=T_{f}(t,z), S_{G}(z)=T_{f}(z), $ and 
$s_{0}(G)=t_{0}(f).$  
\end{rem}

\begin{lem}[\cite{suetds1}]
\label{deltat0lem}
Let $f=(f_{1},\ldots ,f_{s})\in \Exp(s).$ 
Then $\delta (f)=t_{0}(f).$ 
\end{lem}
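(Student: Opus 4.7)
The plan is to read both invariants off the preimage structure of the skew product. First I would unpack the preimages of $\tilde{f}^{n}$: since $\tilde{f}(\omega,z)=(\sigma\omega,f_{\omega_{1}}(z))$, a straightforward induction gives
\[
\tilde{f}^{-n}(\tau,x)=\{(u\tau,y)\,:\,u\in\{1,\ldots,s\}^{n},\ f_{u}(y)=x\},
\]
and by the chain rule $\|(\tilde{f}^{n})'(u\tau,y)\|=\|f_{u}'(y)\|$. Hence, for every $(\tau,x)\in J(\tilde{f})$,
\[
\sum_{n\ge 1}\sum_{\tilde{f}^{n}(w)=(\tau,x)}\|(\tilde{f}^{n})'(w)\|^{-t}\;=\;\sum_{u\in\Sigma_{s}^{\ast}}\sum_{f_{u}(y)=x}\|f_{u}'(y)\|^{-t}\;=\;T_{f}(x,t).
\]

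Next I would invoke the Bowen/Ruelle characterization of topological pressure: because $\tilde{f}\!\upharpoonright\! J(\tilde{f})$ is distance-expanding (Definition~2.5) and open (Remark~\ref{rem1}), the standard thermodynamic formalism of \cite{pubook}, developed for this setting in \cite{sumi2}, gives
\[
P(t,f)=\lim_{n\to\infty}\frac{1}{n}\log\sum_{\tilde{f}^{n}(w)=w_{0}}\|(\tilde{f}^{n})'(w)\|^{-t}
\]
for every $w_{0}\in J(\tilde{f})$ and every $t\in\mathbb{R}$. Combined with the display above, the Cauchy root test shows that $T_{f}(x,t)<\infty$ if and only if $P(t,f)<0$, which by the definition of $\delta(f)$ is equivalent to $t>\delta(f)$. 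Therefore $T_{f}(x)=\delta(f)$ for every $x\in\pi_{2}(J(\tilde{f}))=J(G)$, and in particular $t_{0}(f)\le\delta(f)$.

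For the reverse inequality I would argue that no $z\in\oc$ yields a smaller critical exponent, by transporting the finiteness statement from a point of $J(G)$ to an arbitrary point. Fix $t<\delta(f)$, pick $x\in J(G)$, and join $x$ to $z$ by a smooth curve $\gamma$ that avoids the (at most two-point) exceptional set $E(G)$. Expansion of $\tilde{f}$ on $J(\tilde{f})$ yields a fixed spherical radius $r>0$ such that every inverse branch of every $f_{u}$ extends univalently to the $r$-disk around each point of $J(G)$ with uniform Koebe distortion; covering $\gamma$ by finitely many such disks one extends the relevant branches to a neighborhood of $\gamma$ and obtains, for every $u\in\Sigma_{s}^{\ast}$, a bijection between $f_{u}^{-1}(x)$ and $f_{u}^{-1}(z)$ with derivatives comparable up to a multiplicative constant independent of $u$. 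Summing gives $T_{f}(z,t)\asymp T_{f}(x,t)=\infty$, so $T_{f}(z)\ge\delta(f)$, and taking the infimum yields $t_{0}(f)\ge\delta(f)$.

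The main obstacle is the curve-lifting/distortion step: one must produce, uniformly in the word $u$, a univalent extension of the appropriate branch of $f_{u}^{-1}$ along the full curve $\gamma$ from $x$ to $z$. Expansion on $J(\tilde{f})$ supplies this near $J(G)$ (which is how Step~2 reduces preimage sums to pressure), but pushing the estimate to a general $z\in\oc$ is the only part of the argument that goes beyond purely formal manipulation of the thermodynamic identities; everything else is bookkeeping between the skew-product preimage count and the word-indexed Poincar\'e series.
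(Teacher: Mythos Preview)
The paper does not give a proof of this lemma; it is quoted from \cite{suetds1}. Your first three steps are correct: the identification of $\sum_{\tilde f^n(w)=w_0}\|(\tilde f^n)'(w)\|^{-t}$ with $\sum_{|u|=n}\sum_{f_u(y)=x}\|f_u'(y)\|^{-t}$ is immediate, the preimage formula for pressure holds because $\tilde f|_{J(\tilde f)}$ is open and distance-expanding, and the root test then gives $T_f(x)=\delta(f)$ for every $x\in\pi_2(J(\tilde f))=J(G)$, hence $t_0(f)\le\delta(f)$.

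The gap in your step~4 is somewhat worse than you indicate. The obstruction to analytically continuing a branch of $f_u^{-1}$ along a curve is the set of critical \emph{values} of $f_u$, which lies in $P(G)$, not in $E(G)$; merely avoiding the (at most two) exceptional points does not produce univalent lifts. Since $G$ is expanding it is hyperbolic (Remark~\ref{exphyplem}), so $P(G)\subset F(G)$, and every branch of every $f_u^{-1}$ does extend univalently over any disk contained in $\oc\setminus P(G)$, with Koebe distortion controlled by that disk alone. But then your curve $\gamma$ must lie in $\oc\setminus P(G)$, and for semigroups $P(G)$ need not be totally disconnected, so the component of $\oc\setminus P(G)$ containing a given $z$ may miss $J(G)$ entirely; the finite chain of ``such disks'' you describe (disks of radius $r$ centered on $J(G)$) cannot cover a curve reaching a point far from $J(G)$ in any case.

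A cleaner completion avoids curves altogether. For $z\notin E(G)$ a Montel argument shows that $\bigcup_\omega f_\omega^{-1}(z)$ accumulates on all of $J(G)$; choose a word $\omega_0$ and a point $y\in f_{\omega_0}^{-1}(z)$ lying in your distortion ball $B(x,r)$ about some $x\in J(G)$. Bounded distortion on that single ball already yields $T_f(y,t)\asymp T_f(x,t)=\infty$ for $t<\delta(f)$, and restricting the Poincar\'e series to words of the form $(\omega',\omega_0)$ gives the recursion
\[
T_f(z,t)\ \ge\ \|f_{\omega_0}'(y)\|^{-t}\,T_f(y,t),
\]
which forces $T_f(z,t)=\infty$ whether or not $f_{\omega_0}'(y)$ vanishes. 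The at most two points of $E(G)$ are handled directly (each is a critical value of some $f_\omega$ if some $\deg f_j\ge 2$, and is attracting for the generators in the purely M\"obius case), so $T_f(z)\ge\delta(f)$ for every $z\in\oc$ and hence $t_0(f)=\delta(f)$.
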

\begin{dfn}
Let $G$ be a rational semigroup. 
Then, we define
$$A(G):= \overline{\cup _{g\in G}g(\{ z\in \oc : \exists u\in G, 
u(z)=z, \| u'(z)\| <1\} )}.$$
\end{dfn}

Let us record the following fact proved in \cite{sumi2} . 
\begin{thm}[\cite{sumi2}]
\label{t:oscdhd}
Let $f=(f_{1},\ldots ,f_{s})\in \Exp(s)$ and 
let $G=\langle f_{1},\ldots ,f_{s}\rangle .$ 
Then, by \cite{sumi2} and Lemma~\ref{deltat0lem}, we have 
$\HD (J(G))\leq s_{0}(G)\leq S_{G}(z)\leq \delta (f)=T_{f}(z)=t_{0}(f),$ 
for each $z\in \oc \setminus (A(G)\cup P(G)).$ 
If in addition to the above assumption, 
$f$ satisfies the open set condition, then 
$$\HD(J(G))=s_{0}(G)=S_{G}(z)=\delta (f)=T_{f}(z)=t_{0}(f),$$ 
for each $z\in \oc \setminus (A(G)\cup P(G)).$ 
\end{thm}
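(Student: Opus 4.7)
The statement is essentially a compilation, and my plan is to assemble four ingredients in order. First, the comparison $S_G(z,t)\leq T_f(z,t)$ is term-by-term: every $g\in G$ admits at least one representation $g=f_\omega$ with $\omega\in\Sigma_s^\ast$, so the sum defining $T_f(z,t)$ dominates that defining $S_G(z,t)$ (cf.\ Remark~\ref{strem}); this immediately gives $s_0(G)\leq t_0(f)$ and $S_G(z)\leq T_f(z)$. Second, Lemma~\ref{deltat0lem} identifies $t_0(f)=\delta(f)$, so the right-hand equalities $T_f(z)=t_0(f)=\delta(f)$ reduce to showing $T_f(z)\leq\delta(f)$ for $z\in\oc\setminus(A(G)\cup P(G))$. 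This comes from combining the pressure inequality $P(t,f)<0$ for $t>\delta(f)$ (which holds by the definition of $\delta(f)$ and the expanding hypothesis) with fiberwise distortion estimates for $\tilde{f}$: for $t>\delta(f)$, standard Bowen-style pressure estimates make $\sum_{\omega}\sum_{f_\omega(y)=z}\|f_\omega'(y)\|^{-t}$ converge whenever $z$ avoids critical values and attracting cycles of $G$.

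For the upper bound $\HD(J(G))\leq s_0(G)$, my plan is a covering argument via preimages. Fix $z\in\oc\setminus(A(G)\cup P(G))$. Because $z$ avoids $P(G)$, every inverse branch $g^{-1}$ of every $g\in G$ is well defined on a uniform spherical disk $D$ around $z$ with Koebe-controlled distortion; because $z\notin A(G)$, contributions from neighborhoods of attracting cycles do not spoil convergence. For $t>s_0(G)$, the convergence of $S_G(z,t)$ then furnishes, via the collection of images $g^{-1}(D)$ indexed by $g\in G$, a cover of $J(G)$ whose $t$-dimensional Hausdorff mass is finite; letting $t\downarrow s_0(G)$ yields $\HD(J(G))\leq s_0(G)$ and closes the first chain.

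Under the open set condition, the plan is to upgrade the chain to equalities by producing a $\delta(f)$-conformal measure on $J(G)$ with controlled lower pointwise dimension. Invoking the $\delta(f)$-conformal measure $\nu$ on $J(\tilde{f})$ used in the proof of Theorem~\ref{t:main1}, the disjointness $f_i^{-1}(U)\cap f_j^{-1}(U)=\emptyset$ makes the projection $\pi_2\colon J(\tilde{f})\to J(G)$ essentially injective on preimage cylinders, so $\nu_G:=(\pi_2)_\ast\nu$ is a genuine $\delta(f)$-conformal measure on $J(G)$. Standard Frostman-type estimates, using bounded distortion of inverse branches on balls centered at points avoiding $A(G)\cup P(G)$, then give $\nu_G(B(x,r))\leq C r^{\delta(f)}$ uniformly, forcing $\HD(J(G))\geq\delta(f)$. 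The main obstacle lies precisely here: the open set condition does not itself prevent critical points of individual $f_j$ from distorting the inverse branches, so the restriction $z\notin A(G)\cup P(G)$ has to be used crucially to secure uniform Koebe-type control; without this avoidance the projected conformal measure need not be Ahlfors-regular and the lower dimension bound would fail.
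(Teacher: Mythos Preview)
The paper does not supply a proof of this theorem: it is quoted as a result of \cite{sumi2}, with the identification $\delta(f)=t_0(f)$ imported from Lemma~\ref{deltat0lem} (itself cited from \cite{suetds1}). There is therefore nothing in the present paper to compare your argument against; the authors treat the statement as background.

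Your sketch is a plausible outline of how the cited result is established, and the first two paragraphs are essentially correct in spirit. Two remarks on the open-set-condition half. First, your worry in the final paragraph is misplaced: since $f\in\Exp(s)$, Remark~\ref{exphyplem} gives $P(G)\subset F(G)$, so critical values stay uniformly away from $J(G)$ and Koebe distortion for inverse branches is automatic on a fixed neighborhood of $J(G)$; the avoidance set $A(G)\cup P(G)$ concerns only the base point $z$ of the Poincar\'e series, not the balls $B(x,r)$ with $x\in J(G)$. Second, the assertion that $(\pi_2)_\ast\nu$ is a ``genuine $\delta(f)$-conformal measure on $J(G)$'' is too quick: the open set condition ensures that the cylinder sets $f_\omega^{-1}(U)$, $\omega\in\Sigma_s^\ast$, are pairwise disjoint at each level, which is what makes the mass estimates go through, but $\pi_2$ itself is typically far from injective on $J(\tilde f)$. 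What one actually uses is that the disjointness forces $S_G(z,t)=T_f(z,t)$ (hence $s_0(G)=t_0(f)=\delta(f)$) and that the projected measure satisfies the Ahlfors-type upper bound $\nu_G(B(x,r))\le Cr^{\delta(f)}$ via bounded distortion and the disjoint-preimage structure; conformality in any semigroup sense is not needed and is not what is claimed.
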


In order to prove our second main theorem (see Theorem~\ref{t:main2}), 
we need some notation and lemmas from \cite{sumiprepare}. 
We shall provide the full proofs of these lemmas for the sake of
completeness of our exposition and convenience of the readers.  
\begin{dfn}
For each $s\in \N $, we set 
${\mathcal W}_{s}:=\{ (p_{1},\ldots ,p_{s})\in (0,1)^{s}\mid \sum _{j=1}^{s}p_{j}=1\} .$ 
\end{dfn}
\begin{dfn}[\cite{Se,J1,J2,sumiprepare}] 
\label{d:green} 
Let $f=(f_{1},\ldots , f_{s})\in {\mathcal P}^{s}.$ 
Let $\tilde{f}:\Sg_{s}\times \oc \rightarrow \Sg_{s}\times \oc $ be the skew product map
associated with $f.$ 
For any $\om \in \Sigma _{s}$, we set 
$$A_{\infty ,\om }:= \{ z\in \oc : f_{\om _{n}}\circ \cdots \circ f_{\om _{1}}(z)\rightarrow \infty 
\mbox{ as } n\rightarrow \infty\} .$$ 
For any $(\om ,y)\in \Sg_{s}\times \C $, 
let 
$$G_{\om }(y):= \lim _{n\rightarrow \infty }\frac{1}{\deg (f_{\om _{n}} \circ \cdots \circ f_{\om _{1}})}
\log ^{+}|f_{\om _{n}} \circ \cdots \circ f_{\om _{1}}(y)|,$$ 
where $\log ^{+}a:=\max\{\log a,0\} $ for each $a>0.$  
By the arguments in \cite{Se}, for each $\om \in \Sg_{s}$, the limit 
$G_{\om }(y) $ exists, the function
$G_{\om }$ is subharmonic on $\C $, and 
$G_{\om }|_{A_{\infty ,\om }}$ is equal to the Green's function on 
$A_{\infty ,\om }$ with pole at $\infty $.
Moreover, $(\om ,y)\mapsto G_{\om }(y)$ is continuous on $\Sg _{s}\times \C .$ 
Let $\mu _{\om }:=dd^{c}G_{\om }$, where $d^{c}:=\frac{i}{2\pi }(\overline{\partial }-\partial ).$ 
Note that by the argument in \cite{J1,J2}, 
$\mu _{\om }$ is a Borel probability measure on $J_{\om }(\tilde{f})$ such that 
$\mbox{supp}\, \mu _{\om }=J_{\om }(\tilde{f}).$ 
Furthermore, for each $\om \in \Sg_{s}$, 
let $\Omega (\om  )=\sum _{c} G_{\om }(c)$, where $c$ runs over all critical points of 
$f_{\om _{1}}$ in $\C $, counting multiplicities.   
\end{dfn}
\begin{rem}[\cite{hiroki1}]
\label{r:maxrelent}
Let $f=(f_{1},\ldots ,f_{s})\in (\mbox{{\em Rat}})^{s}.$ 
Let $\tilde{f}:\Sg_{s}\times \oc \rightarrow \Sg_{s}\times \oc $ 
be the skew product map associated with $f.$ 
Also, let $p=(p_{1},\ldots ,p_{s})\in {\mathcal W}_{s}$ and 
let $\tau $ be the Bernoulli measure on $\Sg_{s}$ with respect to the weight $p.$ 
Suppose that $\deg (f_{j})\geq 2$ for each $j=1,\ldots ,s.$  
Then, there exists a unique $\tilde{f}$-invariant Borel probability ergodic measure 
$\mu $ on $\Sg_{s}\times \oc $ such that $(\pi _{1})_{\ast }(\mu )= \tau $ and   
$$
h_{\mu }(\tilde{f}|\sigma )=\max _{\rho \in {\mathcal E}_{1}(\Sg_{s}\times \oc ): 
\tilde{f}_{\ast }(\rho )=\rho, (\pi _{1})_{\ast }(\rho )=\tau }  
h_{\rho }(\tilde{f}|\sigma )=\sum _{j=1}^{s}p_{j}\log (\deg (f_{j})),
$$
where $h_{\rho }(\tilde{f}|\sigma )$ denotes the relative metric entropy 
of $(\tilde{f},\rho )$ with respect to $(\sigma, \tau )$, and 
${\mathcal E}_{1}(\cdot )$ denotes the space of ergodic measures for $\tilde{f}:\Sigma _{s}\times \oc \rightarrow \Sigma _{s}\times \oc $ (see \cite{hiroki1}).  
The measure $\mu $ is called the {\bf maximal relative entropy
  measure} for $\tilde{f}$ with respect to  
$(\sigma , \tau ).$   
\end{rem}
\begin{lem}[\cite{sumiprepare}]
\label{l:maxentmeas1}
Let $f=(f_{1},\ldots ,f_{s})\in {\mathcal P}^{s}$ and  
let $G=\langle f_{1},\ldots ,f_{s}\rangle .$ 
Let $p=(p_{1},\ldots ,p_{s})\in {\mathcal W}_{s}.$  
Let $\tilde{f}:\Sg_{s}\times \oc \rightarrow \Sg_{s}\times \oc $ be the  
skew product associated with $f.$  
Let 
$\tau $ be the Bernoulli measure on $\Sg_{s}$ with respect to the weight $p.$ 
Let $\mu $  
be a Borel probability measure on $J(\tilde{f})$ defined by 
$$
\langle \mu ,\varphi \rangle := \int _{\Sg_{s}}
\int _{\oc  }\varphi (\om ,z) d\mu _{\om }(z)\, d\tau (\om )
$$ 
for any continuous function $\varphi $ on $\Sg_{s}\times \oc $, where 
$\mu _{\om }$ is the measure coming from Definition~\ref{d:green}. 
Then, $\mu $ is an $\tilde{f}$-invariant ergodic measure, 
$\pi _{\ast }(\mu )=\tau $,  and 
$\mu $ is the maximal relative entropy measure for $\tilde{f}$ with respect to $(\sigma , \tau )$ 
(see Remark~\ref{r:maxrelent}). 
\end{lem}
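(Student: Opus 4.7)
My plan is to first record the fundamental scaling identity satisfied by the random Green functions. Peeling off the first iterate in the limit defining $G_\omega$ gives, on $\C$,
$G_\omega(y) = d_{\omega_1}^{-1}\, G_{\sigma\omega}(f_{\omega_1}(y))$.
Applying $dd^{c}$ yields $f_{\omega_1}^{\ast}\mu_{\sigma\omega} = d_{\omega_1}\mu_\omega$, and hence the pushforward identity
$(f_{\omega_1})_{\ast}\mu_\omega = \mu_{\sigma\omega}$.
The continuity of $(\omega,y)\mapsto G_\omega(y)$ noted in Definition~\ref{d:green} makes $\omega \mapsto \mu_\omega$ weakly continuous, so the integral defining $\mu$ is a Borel probability measure supported in $J(\tilde f)$.

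From these ingredients the projection and invariance claims are short. Testing against $\varphi(\omega,z)=\psi(\omega)$ and using $\mu_\omega(\oc)=1$ immediately gives $\pi_{\ast}\mu=\tau$. For $\tilde f$-invariance I would compute
$\int \varphi\circ\tilde f\, d\mu
= \int_{\Sigma_s}\!\!\int_{\oc}\varphi(\sigma\omega,f_{\omega_1}(z))\,d\mu_\omega(z)\,d\tau(\omega)
= \int_{\Sigma_s}\!\!\int_{\oc}\varphi(\sigma\omega,w)\,d\mu_{\sigma\omega}(w)\,d\tau(\omega)
= \int \varphi\,d\mu$,
using the pushforward identity and the $\sigma$-invariance of the Bernoulli measure $\tau$.

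The heart of the argument is showing that $\mu$ realizes the maximum relative entropy $\sum_{j=1}^{s} p_j\log d_j$ identified in Remark~\ref{r:maxrelent}; ergodicity then follows \emph{gratis} from the uniqueness clause of that remark. My plan is to compute $h_\mu(\tilde f\,|\,\sigma)$ via a fiberwise Rokhlin--Ledrappier-type formula. For $\tau$-a.e.\ $\omega$, the pushforward identity exhibits $f_{\omega_1}:(\oc,\mu_\omega) \to (\oc,\mu_{\sigma\omega})$ as a $d_{\omega_1}$-to-$1$ factor off the finite critical value set, which is $\mu_\omega$-null because $\mu_\omega$ is a Brolin--Lyubich-type equilibrium measure and therefore non-atomic (recall $d_j\ge 2$ for all $j$). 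A generating sequence of finite measurable partitions of $\oc$ whose $\tilde f$-pullbacks separate the inverse branches of $f_{\omega_1}$ then has fiber conditional entropy equal to $\log d_{\omega_1}$, and integration against $\tau$ yields
$h_\mu(\tilde f\,|\,\sigma) = \int_{\Sigma_s} \log d_{\omega_1}\, d\tau(\omega) = \sum_{j=1}^{s} p_j\log d_j$.
The main obstacle is executing this fiber-partition construction cleanly across the critical sets; I would side-step delicate generating-partition arguments by combining the upper bound implicit in Remark~\ref{r:maxrelent} with the reverse inequality, obtained either from a fiberwise variational principle (Ledrappier--Walters) or by directly estimating the Rokhlin conditional entropy with respect to the pushforward partition $\tilde f^{-1}(\pi_{\Sigma_s}^{-1}(C))$ for a cylinder $C \subset \Sigma_s$. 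Once $h_\mu(\tilde f\,|\,\sigma) = \sum_j p_j \log d_j$ is established, Remark~\ref{r:maxrelent} identifies $\mu$ as the maximal relative entropy measure for $\tilde f$ with respect to $(\sigma,\tau)$, and in particular forces $\mu$ to be ergodic.
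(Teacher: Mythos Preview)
Your proposal follows essentially the same route as the paper's proof, which cites Jonsson \cite{J2} for the $\tilde f$-invariance, ergodicity, and projection identity $\pi_\ast\mu=\tau$ (Theorem~4.2(i) there), and again for the entropy lower bound $h_\mu(\tilde f\,|\,\sigma)\ge \sum_j p_j\log d_j$ (Theorem~5.2(i) there), and then invokes the uniqueness in \cite{hiroki1} (Remark~\ref{r:maxrelent}) to conclude. Your explicit functional equation $G_\omega=d_{\omega_1}^{-1}G_{\sigma\omega}\circ f_{\omega_1}$ and the resulting pushforward identity $(f_{\omega_1})_\ast\mu_\omega=\mu_{\sigma\omega}$ are precisely what underlies Jonsson's invariance argument, and your Rokhlin--Ledrappier sketch for the fiber entropy is the content of his entropy computation.

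The one structural difference is the order in which ergodicity is obtained: the paper takes it directly from Jonsson before invoking uniqueness, whereas you postpone it and read it off from the uniqueness of the relative entropy maximizer. That route does work, but it is not quite \emph{gratis}: the uniqueness in Remark~\ref{r:maxrelent} is stated among ergodic measures, so to close the loop you must insert the standard ergodic decomposition step (using that the Bernoulli measure $\tau$ is extremal among $\sigma$-invariant measures to force every ergodic component of $\mu$ to project to $\tau$, and then affinity of relative entropy to force each component to achieve the maximum and hence coincide with the unique maximizer). Once that is said, the two proofs are the same.
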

\begin{proof}
By the argument of the proof of \cite[Theorem 4.2(i)]{J2}, 
$\mu $ is $\tilde{f}$-invariant and ergodic, and 
$\pi _{\ast }(\mu )=\tau .$ 
Moreover, the argument of the proof of \cite[Theorem 5.2(i)]{J2},
yields that
$$
h_{\mu }(\tilde{f}|\sigma )\geq \int \log \deg (f_{\om  _{1}}) d\tau (\om )=
\sum _{j=1}^{m}p_{j}\log \deg (f_{j}).
$$ 
Combining this with 
\cite[Theorem 1.3(e)(f)]{hiroki1}, it follows that 
$\mu $ is the unique maximal relative entropy measure for $\tilde{f}$
with respect to $(\sigma, \tau ).$ 
\end{proof}
\begin{lem}[\cite{sumiprepare}] 
\label{l:lyapcalc}
Let $f=(f_{1},\ldots ,f_{s})\in {\mathcal P}^{s}$.  
Let $p=(p_{1},\ldots ,p_{s})\in {\mathcal W}_{s}.$  
Let 
$\tau $ be the Bernoulli measure on $\Sg_{s}$ with respect to the weight $p.$ 
Let $\tilde{f}:\Sg_{s}\times \oc \rightarrow \Sg_{s}\times \oc $ be the  
skew product associated with $f.$  
Let $\mu $ be the maximal relative entropy measure for $\tilde{f}$ with respect to 
$(\sigma , \tau ).$ 
Then 
$$
\int _{\Sg_{s}\times \oc }\log \|\tilde{f}'\|  d\mu =
\sum _{j=1}^{s}p_{j}\log \deg (f_{j})+\int _{\Sg_{s}}\Omega (\om  ) 
d\tau (\om ) .
$$ 
\end{lem}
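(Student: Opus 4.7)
The plan is to use the fibered representation $\mu = \int \mu_\omega \, d\tau(\omega)$ provided by Lemma~\ref{l:maxentmeas1}, reduce the spherical derivative to the Euclidean one, and then invoke a standard potential-theoretic identity for $G_\omega$ together with its functional equation along the skew product.

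Since $\tilde f'(\omega,z) = f_{\omega_1}'(z)$, Lemma~\ref{l:maxentmeas1} gives
$$\int \log\|\tilde f'\| \, d\mu = \int_{\Sigma_s} \int_{\oc} \log\|f_{\omega_1}'(z)\| \, d\mu_\omega(z) \, d\tau(\omega).$$
Writing $\log\|f_{\omega_1}'(z)\| = \log|f_{\omega_1}'(z)| + \log(1+|z|^2) - \log(1+|f_{\omega_1}(z)|^2)$, and noting that $\deg(f_j)\geq 2$ forces $J(G)\subset\C$ to be compact so that $\supp\mu$ is compact in $\Sigma_s\times\C$, both correction terms are bounded. The $\tilde f$-invariance of $\mu$ then makes them integrate to the same value, so they cancel and one may work with $\log|f_{\omega_1}'|$. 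For fixed $\omega$, factor $f_{\omega_1}'(z) = d_{\omega_1} a_{\omega_1} \prod_{c}(z-c)$, where $a_{\omega_1}$ is the leading coefficient of $f_{\omega_1}$ and $c$ runs over its $d_{\omega_1}-1$ critical points in $\C$ with multiplicity, giving
$$\int \log|f_{\omega_1}'(z)| \, d\mu_\omega = \log d_{\omega_1} + \log|a_{\omega_1}| + \sum_c \int \log|z - c| \, d\mu_\omega(z).$$

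The crucial identity is $\int \log|z - c|\, d\mu_\omega(z) = G_\omega(c) - \kappa_\omega$, where $\kappa_\omega$ is the constant in the expansion $G_\omega(z) = \log|z| + \kappa_\omega + o(1)$ as $z\to\infty$. Indeed, the logarithmic potential $U_\omega(w) := \int \log|z-w|\, d\mu_\omega(z)$ satisfies $dd^c U_\omega = \mu_\omega = dd^c G_\omega$, so $G_\omega - U_\omega$ is harmonic on $\C$; since $U_\omega(z) = \log|z| + o(1)$ at $\infty$ (standard for compactly supported probability measures), this harmonic function has a limit at $\infty$ and therefore equals the constant $\kappa_\omega$. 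Using that $G_\omega$ is continuous and finite on $\C$, the value $G_\omega(c)$ is well defined even when $c\in\supp\mu_\omega$. Substituting, one obtains
$$\int \log|f_{\omega_1}'(z)| \, d\mu_\omega = \log d_{\omega_1} + \log|a_{\omega_1}| + \Omega(\omega) - (d_{\omega_1}-1)\kappa_\omega.$$

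To finish, use the functional equation $G_{\sigma\omega}\circ f_{\omega_1} = d_{\omega_1} G_\omega$, which is immediate from the defining limit. Comparing leading asymptotics at $\infty$ yields
$$d_{\omega_1}\kappa_\omega = \log|a_{\omega_1}| + \kappa_{\sigma\omega},$$
and iterating gives $\kappa_\omega = \sum_{n\geq 1} (d_{\omega_1}\cdots d_{\omega_n})^{-1}\log|a_{\omega_n}|$, so $\omega\mapsto\kappa_\omega$ is uniformly bounded and $\tau$-integrable. Rewriting the functional equation as $(d_{\omega_1}-1)\kappa_\omega = \log|a_{\omega_1}| + \kappa_{\sigma\omega} - \kappa_\omega$ and integrating against $\tau$, the $\kappa$-terms cancel by invariance of $\tau$ under $\sigma$, so $\int (d_{\omega_1}-1)\kappa_\omega\, d\tau = \int \log|a_{\omega_1}|\, d\tau$. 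The $\log|a_{\omega_1}|$ contributions therefore cancel, and what remains is exactly
$$\int \log\|\tilde f'\|\, d\mu = \sum_{j=1}^{s} p_j \log d_j + \int_{\Sigma_s} \Omega(\omega)\, d\tau(\omega).$$

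The main obstacle I anticipate is bookkeeping the potential-theoretic step cleanly — verifying that the harmonic function $G_\omega - U_\omega$ extends continuously with limit $\kappa_\omega$ at $\infty$, and that Fubini is legitimate when bringing $\Omega(\omega)$ out of the fiber integrals. Both follow from the compactness of $\supp\mu$, continuity of $G_\omega$, and the uniform bound on $\kappa_\omega$ established from the geometric series above.
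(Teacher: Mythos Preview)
Your proof is correct and follows essentially the same route as the paper: disintegrate $\mu$ via Lemma~\ref{l:maxentmeas1}, use the potential identity $\int\log|z-c|\,d\mu_\omega=G_\omega(c)-\kappa_\omega$ (the paper writes $R(\omega)$ for your $\kappa_\omega$), and cancel the $\log|a_{\omega_1}|$ term against $(d_{\omega_1}-1)\kappa_\omega$ using the functional equation for $G_\omega$ together with the $\sigma$-invariance of $\tau$. The only small difference is that the paper deduces continuity (hence integrability) of $R(\omega)$ directly from the potential identity and the known continuity of $(\omega,z)\mapsto G_\omega(z)$, whereas your geometric-series argument for boundedness of $\kappa_\omega$ is slightly circular as written (passing from the iterated relation to the series identity already presupposes the remainder $(d_{\omega_1}\cdots d_{\omega_N})^{-1}\kappa_{\sigma^N\omega}\to 0$); this is easily repaired by the paper's continuity argument.
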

\begin{proof}
For each $\om \in \Sg_{s}$, let 
$d(\om )=\deg (f_{\om _{1}})$ and 
$R(\om ):= \lim _{z\rightarrow \infty }(G_{\om }(z)-\log |z|).$ 
Also, we denote by $a(\om )$ the coefficient of the highest order term of $f_{\om _{1}}.$ 
Since $\frac{1}{d(\om )}G_{\sigma (\om )}(f_{\om _{1}}(z))=G_{\om }(z)$, 
we obtain that $R(\sigma (\om ))+\log |a(\om )|=d(\om )R(\om )$ for each 
$\om \in \Sg_{s}.$ 
Moreover, 
since $dd^{c}(\int _{\C }\log |w-z|d\mu _{\om }(w))=\mu _{\om }$ and 
$\int _{\C }\log |w-z|d\mu _{\om }(w)=\log |z|+o(1)$ as $z\rightarrow \infty $ (see \cite{Ra}), 
we have $\int _{\C }\log |w-z|d\mu _{\om }(w)=G_{\om }(z)-R(\om )$ for each $\om \in \Sg_{s}$ 
and $z\in \C .$ In particular, the function $\om \mapsto R(\om )$ is continuous on $\Sg_{s}.$ 
It follows from the above formula, that 
$$
\int _{\oc }\log |f_{\om _{1}}'(z)| d\mu _{\om }(z)=\log |a(\om
)|+\log d(\om )-(d(\om )-1)R(\om )+\Omega (\om )
$$  
for each $\om \in \Sg_{s}.$ 
In particular, the function $\om \mapsto \int _{\om }\log |f_{\om
  _{1}}'(z)| d\mu _{\om }(z)$ is continuous on  $\Sg_{s}.$ 
Furthermore, $\sigma _{\ast }(\tau )=\tau .$ 
From these arguments and Lemma~\ref{l:maxentmeas1}, 
we obtain 
\begin{align*}
\int _{\Sg_{s}\times \oc }\log |\tilde{f}'|d\mu 
& = \int _{\Sg_{s}}d\tau (\om )\int _{\oc }\log |f_{\om _{1}}'(z)| d\mu _{\om }(z)\\ 
&=\int _{\Sg_{s}}\left(\log |a(\om )|+\log d(\om )-(d(\om )-1)R(\om
  )+\Omega (\om )\right) d\tau (\om )\\  
&=\int _{\Sg_{s}}\left(R(\om )-R(\sigma (\om ))+\log d(\om )+\Omega (\om )\right) d\tau (\om )\\ 
&=\int _{\Sg_{s}}(\log d(\om )+\Omega (\om )) d\tau (\om )
=\sum _{j=1}^{s}p_{j}\log \deg (f_{j})+\int _{\Sg_{s}}\Omega (\om ) d\tau (\om ). 
\end{align*}
Moreover, since $\mu $ is $\tilde{f}$-invariant, and since the
Euclidian metric and the spherical metric  
are comparable on the compact subset $J(G)$ of $\C $, we have 
$\int _{\Sg_{s}\times \oc }\log |\tilde{f}'|d\mu= \int _{\Sg_{s}\times \oc }\log \|\tilde{f}'\| d\mu .$ 
Thus, we have proved our lemma. 
\end{proof}
\begin{lem}
\label{l:lyappbd}
Let $f=(f_{1},\ldots ,f_{s})\in {\mathcal P}^{s}.$ Let 
$d_{j}=\deg (f_{j})$ for each $j$ and let $d=\sum _{j}d_{j}.$ 
Let $\mu $ be the maximal entropy measure 
for $\tilde{f}:\Sigma _{s}\times \oc \rightarrow \Sigma _{s}\times \oc $ (see \cite{hiroki1}). 
Let $\tau $ be the Bernoulli measure on $\Sg _{s}$ with respect to the weight 
$(\frac{d_{1}}{d},\ldots ,\frac{d_{s}}{d}).$ 
Then, we have 
$$\int _{J(\tilde{f})}\log \| \tilde{f}'\| d\mu =\sum
_{j=1}^{s}\frac{d_{j}}{d}\log d_{j}+\int _{\Sg_{s}}\Omega (\om )d\tau
(\om ).$$ 
In particular, if, in addition to the assumptions of our lemma,
$\langle f_{1},\ldots ,f_{m}\rangle $ is postcritically bounded,  then  
$$\int _{J(\tilde{f})}\log \| \tilde{f}'\| d\mu =\sum _{j=1}^{s}\frac{d_{j}}{d}\log d_{j}.$$ 
\end{lem}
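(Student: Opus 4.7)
The plan is to reduce Lemma~\ref{l:lyappbd} to Lemma~\ref{l:lyapcalc} by identifying the maximal entropy measure $\mu$ of $\tilde f$ with the maximal relative entropy measure associated to the specific weight $p=(d_1/d,\ldots,d_s/d)$ on $\Sigma_s$, and then showing that the critical integral term vanishes in the postcritically bounded case.

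First, I would establish that $(\pi_1)_\ast(\mu)=\tau$ for the Bernoulli measure $\tau$ with weight $(d_j/d)_j$. By Abramov--Rokhlin, for any $\tilde f$-invariant Borel probability measure $\rho$ on $J(\tilde f)$,
$$
h_\rho(\tilde f)=h_{(\pi_1)_\ast\rho}(\sigma)+h_\rho(\tilde f\,|\,\sigma).
$$
Given any $\sigma$-invariant Borel probability measure $\nu$ on $\Sigma_s$, Remark~\ref{r:maxrelent} (combined with a standard ergodic decomposition argument applied to Bernoulli weights) gives the upper bound $h_\rho(\tilde f\,|\,\sigma)\le\int\log d(\omega_1)\,d\nu(\omega)$, with equality attained uniquely. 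When $\nu$ is the Bernoulli measure with weight $p=(p_1,\ldots,p_s)\in\mathcal{W}_s$, this becomes $\sum_j p_j\log d_j$, so $h_\rho(\tilde f)\le -\sum_j p_j\log p_j+\sum_j p_j\log d_j$. Maximizing the right-hand side over $p\in\mathcal{W}_s$ by Lagrange multipliers yields $p_j=d_j/d$ with maximum $\log d=\log\deg(\tilde f)$, which is exactly $h(\tilde f)$. Uniqueness of the maximal entropy measure (see Remark~\ref{rem1} and \cite{hiroki1}) therefore forces $(\pi_1)_\ast\mu=\tau$ and identifies $\mu$ with the maximal relative entropy measure for $\tilde f$ with respect to $(\sigma,\tau)$.

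Second, I would directly invoke Lemma~\ref{l:lyapcalc} with $p=(d_1/d,\ldots,d_s/d)$, which immediately yields
$$
\int_{J(\tilde f)}\log\|\tilde f'\|\,d\mu=\sum_{j=1}^s\frac{d_j}{d}\log d_j+\int_{\Sigma_s}\Omega(\omega)\,d\tau(\omega),
$$
giving the first assertion of the lemma.

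For the postcritically bounded case, I would show that $\Omega(\omega)=0$ for every $\omega\in\Sigma_s$. Fix $\omega$ and let $c$ be a critical point of $f_{\omega_1}$ in $\C$. Then $f_{\omega_1}(c)$ is a critical value of $f_{\omega_1}$, hence lies in $P^\ast(G)$. A straightforward chain-rule computation shows that $g(P^\ast(G))\subset P^\ast(G)$ for every $g\in G$ (critical values of $h\circ g$ comprise critical values of $h$ together with $h$-images of critical values of $g$, and $\infty$ is fixed by each $f_j$), so the entire orbit $\{f_{\omega_n}\circ\cdots\circ f_{\omega_1}(c)\}_{n\ge 1}$ remains in the bounded set $P^\ast(G)$. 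Hence $\log^+|f_{\omega_n}\circ\cdots\circ f_{\omega_1}(c)|$ is uniformly bounded in $n$, while $\deg(f_{\omega_n}\circ\cdots\circ f_{\omega_1})=\prod_{k=1}^n d_{\omega_k}\ge 2^n\to\infty$ since every $d_j\ge 2$. Therefore $G_\omega(c)=0$, so $\Omega(\omega)=0$, proving the second assertion.

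The main obstacle is the first step: one must cleanly apply the relative variational principle to identify the absolute maximal entropy measure of $\tilde f$ with the relative one corresponding to the specific weight $(d_j/d)_j$. The vanishing of $\Omega$ in the postcritically bounded case, by contrast, is essentially a forward-invariance observation combined with the fact that polynomial degrees multiply under composition.
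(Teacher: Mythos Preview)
Your proof is correct and follows the same route as the paper: identify the maximal entropy measure $\mu$ with the maximal relative entropy measure for $\tilde f$ over the Bernoulli measure $\tau$ with weights $(d_j/d)_j$, then apply Lemma~\ref{l:lyapcalc}. The paper simply cites \cite{hiroki1} for the identification step and leaves the vanishing of $\Omega(\omega)$ in the postcritically bounded case implicit, whereas you spell out both via Abramov--Rokhlin and the forward invariance of $P^\ast(G)$.
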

\begin{proof}
Let 
$$p=\left(\frac{d_{1}}{d},\ldots ,\frac{d_{s}}{d}\right)\in {\mathcal W}_{s}.$$  
Let $\tau $ be the Bernoulli measure on $\Sg_{s}$ with respect to the weight $p.$ 
By \cite{hiroki1}, $\mu $ is equal to the maximal relative entropy measure 
for $\tilde{f}$ with respect to $(\sigma , \tau ).$ 
By Lemma~\ref{l:lyapcalc}, the statement of our lemma holds.  
\end{proof}
We now give a lower estimate of the Hausdorff dimension of Julia sets
of  expanding finitely generated  
polynomial semigroups satisfying the open set condition. 
\begin{thm}
\label{p:hdlower}
Let $f=(f_{1},\ldots ,f_{s})\in \Exp(s)\cap {\mathcal
  P}^{s}$. Assume $f$ satisfies the open set condition.  
Let 
$d_{j}=\deg (f_{j})$ for each $j$ and let $d=\sum _{j}d_{j}.$ 
Let $\mu $ be the maximal entropy measure 
for $\tilde{f}:\Sigma _{s}\times \oc \rightarrow \Sigma _{s}\times \oc $ (see \cite{hiroki1}). 
Let $\tau $ be the Bernoulli measure on $\Sg _{s}$ with respect to the weight 
$(\frac{d_{1}}{d},\ldots ,\frac{d_{s}}{d}).$ 
Let $G=\langle f_{1},\ldots ,f_{s}\rangle .$   
Then, the following hold.
\begin{itemize}
\item[(1)]
\begin{equation}
\label{eq:hdjggeq}
\HD (J(G))=\delta (f)\geq \frac{\log d}{\int _{J(\tilde{f})}\log \| \tilde{f}'\| d\mu }
=\frac{\log d}{\sum _{j=1}^{s}\frac{d_{j}}{d}\log d_{j}+\int _{\Sg_{s}}\Omega (\om )d\tau (\om )}.
\end{equation}
\item[(2)]
If the inequality in {\em (\ref{eq:hdjggeq})} is replaced by the equality, then 

\begin{itemize}
\item[(a)]
$d_{1}=\cdots =d_{s}.$ We set $d_{0}=d_{1}=\cdots =d_{s}.$ 
\item[(b)] 
There exists an element $\varphi \in \mbox{{\em Aut}}(\C )$ and complex numbers $a_{1},\ldots , a_{s}$ with $a_{1}=1$ 
such that for each $j=1,\ldots ,s$, 
$\varphi f_{j}\varphi ^{-1}(z)=a_{j}z^{d_{0}}.$ 
\item[(c)] $\delta (f)=1+\frac{\log s}{\log d_{0}}.$ 
\end{itemize}
\item[(3)]
If, in addition to the assumptions of our lemma, $f\in \Epb(s)$,  
then 
$$\HD(J(G))=\delta (f)\geq \frac{\log d}{\sum _{j=1}^{s}\frac{d_{j}}{d}\log d_{j}}.$$ 

\end{itemize}
\end{thm}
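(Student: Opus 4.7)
\medskip

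\noindent\textbf{Proof proposal for Theorem \ref{p:hdlower}.} The plan is to assemble part (1) from three ingredients already prepared in the paper: the identification $\HD(J(G))=\delta(f)$ under the open set condition (Theorem~\ref{t:oscdhd}); the Bowen-parameter lower bound $\delta(f)\ge \log d /\int\log\|\tilde f'\|\,d\mu$ (Theorem~\ref{t:main1}(1), applied with the maximal entropy measure $\mu$); and the explicit evaluation of the Lyapunov integral $\int_{J(\tilde f)}\log\|\tilde f'\|\,d\mu=\sum_{j=1}^s\tfrac{d_j}{d}\log d_j+\int_{\Sigma_s}\Omega(\om)\,d\tau(\om)$ supplied by Lemma~\ref{l:lyappbd}. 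Chaining the three yields \eqref{eq:hdjggeq}.

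For part (2), if equality holds in \eqref{eq:hdjggeq} then in particular $\delta(f)=\log d/\int\log\|\tilde f'\|\,d\mu$, and since $f\in\mathcal P^s$ forces $d_j\ge 2$, Theorem~\ref{t:main1}(2) applies verbatim and immediately gives (a), (c), and the existence of $\varphi\in\mathrm{Aut}(\oc)$ together with constants $a_j$ (with $a_1=1$) such that $\varphi f_j\varphi^{-1}(z)=a_jz^{\pm d_0}$ for every $j$. The remaining point — and, I expect, the only real obstacle — is to upgrade $\varphi$ to an element of $\mathrm{Aut}(\C)$ and to eliminate the negative exponent. The argument I plan to run is the following: since each $f_j$ is a polynomial, $\infty$ is a fixed critical point of $f_j$, so $\varphi(\infty)$ is a fixed critical point of $z\mapsto a_jz^{\pm d_0}$. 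The critical points of this map are $0$ and $\infty$, but for the exponent $-d_0$ the map swaps $0$ and $\infty$, so neither is fixed; this forces the exponent to be $+d_0$ for every $j$. Thus $\varphi(\infty)\in\{0,\infty\}$. In the case $\varphi(\infty)=\infty$ we already have $\varphi\in\mathrm{Aut}(\C)$. In the case $\varphi(\infty)=0$, post-composing $\varphi$ with the involution $\iota(z)=1/z$ keeps the conjugates in the form $a_j'z^{d_0}$ (with $a_j'=1/a_j$) and now sends $\infty$ to $\infty$, so $\iota\circ\varphi\in\mathrm{Aut}(\C)$; after renormalizing so that the new $a_1=1$ we obtain (b).

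For part (3), observe that under the additional hypothesis $f\in\mathrm{Epb}(s)$ the postcritical set $P^*(G)$ is a bounded subset of $\C$; in particular, for every critical point $c$ of any $f_{\om_1}$ lying in $\C$, the forward orbit $f_{\om_n}\circ\cdots\circ f_{\om_1}(c)$ remains in a common compact subset of $\C$, so $G_\om(c)=0$ and hence $\Omega(\om)=0$ identically on $\Sigma_s$. Therefore $\int_{\Sigma_s}\Omega\,d\tau=0$, and part (1) collapses to the cleaner estimate claimed in (3). (This is exactly the ``in particular'' statement of Lemma~\ref{l:lyappbd}, so (3) is really a direct corollary of (1) together with that lemma.)
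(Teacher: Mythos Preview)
Your proposal is correct and follows precisely the approach the paper takes: the paper's own proof is the one-line citation ``By Theorem~\ref{t:main1}, Lemma~\ref{l:lyappbd} and Theorem~\ref{t:oscdhd}, we obtain the statement of our Theorem,'' and you have simply unpacked these three ingredients in the right order. Your treatment of part~(2)(b) is in fact more careful than the paper's, since Theorem~\ref{t:main1}(2) only yields $\varphi\in\mathrm{Aut}(\oc)$ with $\varphi f_j\varphi^{-1}(z)=a_jz^{\pm d_0}$, and your fixed-critical-point argument correctly rules out the negative exponent and reduces $\varphi$ to $\mathrm{Aut}(\C)$ (incidentally, no renormalization is needed after composing with $\iota$, since $a_1'=1/a_1=1$).
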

\begin{proof}
By Theorem~\ref{t:main1}, Lemma~\ref{l:lyappbd} and Theorem~\ref{t:oscdhd}, 
we obtain the statement of our Theorem. 
\end{proof}
\begin{rem}
If $s>1$, then $\frac{\log d}{\sum _{j=1}^{s}\frac{d_{j}}{d}\log d_{j}}>1.$ 
\end{rem}
We now formulate and prove our second main theorem. 

\begin{thm}
\label{t:main2}
Let $f=(f_{1},\ldots ,f_{s})\in \Epb (s).$ 
Let 
$d_{j}=\deg (f_{j})$ for each $j$ and let $d=\sum _{j}d_{j}.$ 
Suppose that  
$(\log d )/(\sum _{j=1}^{s}\frac{d_{j}}{d}\log d_{j})\geq 2$ and 
$\delta (f)\leq 2.$ 
Then, we have the following.
\begin{itemize}
\item[(1)]
There exist a $\varphi \in \mbox{{\em Aut}}(\C )$ 
and non-zero complex numbers $a_{1},\ldots ,a_{s}$ 
such that for each $j=1,\ldots ,s$, 
$\varphi \circ f_{j}\circ \varphi ^{-1}(z)=a_{j}z^{s}$ for all $z\in \oc .$ 
\item[(2)] $d_{1}=\cdots =d_{s}=s$ and 
$$\delta (f)=2=\frac{\log d}{\sum _{j=1}^{s}\frac{d_{j}}{d}\log d_{j}}.$$ 
\end{itemize}
\end{thm}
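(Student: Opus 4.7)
The plan is to observe that the two hypotheses $(\log d)/(\sum_j\tfrac{d_j}{d}\log d_j)\geq 2$ and $\delta(f)\leq 2$ squeeze the Bowen parameter into the equality case of Theorem~\ref{t:main1}(1), and then to extract the extra rigidity coming from the polynomial, postcritically bounded setting. The M\"obius conjugacy then has to land in $\mbox{{\em Aut}}(\C)$, not merely in $\mbox{{\em Aut}}(\oc)$.

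First, since $f\in\Epb(s)$, Lemma~\ref{l:lyappbd} reduces the Lyapunov integral to
\[\int_{J(\tilde{f})}\log\|\tilde{f}'\|\,d\mu=\sum_{j=1}^{s}\frac{d_j}{d}\log d_j,\]
so Theorem~\ref{t:main1}(1) yields $\delta(f)\geq(\log d)/(\sum_j\tfrac{d_j}{d}\log d_j)\geq 2$. Together with $\delta(f)\leq 2$, this forces $\delta(f)=2=(\log d)/(\sum_j\tfrac{d_j}{d}\log d_j)$, establishing the numerical part of conclusion~(2) and simultaneously producing equality in Theorem~\ref{t:main1}(1). Since $f_j\in{\mathcal P}$ implies $d_1\geq 2$, Theorem~\ref{t:main1}(2) applies: there exist $\varphi\in\mbox{{\em Aut}}(\oc)$ and constants $a_j$ with $\varphi f_j\varphi^{-1}(z)=a_j z^{\pm d_0}$, where $d_0:=d_1=\cdots=d_s$, and $\delta(f)=1+(\log s)/(\log d_0)$. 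Setting this equal to $2$ gives $d_0=s$, so $d_j=s$ for all $j$, completing conclusion~(2).

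It remains to upgrade Theorem~\ref{t:main1}(2)(b) to conclusion~(1). The $\pm$ sign must be $+$ for every $j$: because $f_j$ is a polynomial of degree $d_0\geq 2$, the point $\infty$ is a superattracting fixed point of $f_j$ of local degree $d_0$, so $\varphi(\infty)$ must be a fixed point of $g_j:=\varphi f_j\varphi^{-1}$ at which $g_j$ has local degree $d_0\geq 2$. But if $g_j(z)=a_j z^{-d_0}$, its fixed points satisfy $z^{d_0+1}=a_j$, none of which coincide with the only critical points $0$ and $\infty$, so the local degree at any such fixed point is $1$, a contradiction. Hence $\varphi f_j\varphi^{-1}(z)=a_j z^{d_0}=a_j z^{s}$ for each $j$. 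Finally, $\varphi(\infty)$ is then a superattracting fixed point common to all the $g_j$, hence lies in $\{0,\infty\}$; if $\varphi(\infty)=0$, post-composing $\varphi$ with the inversion $z\mapsto 1/z$ preserves the normal form (with each $a_j$ replaced by $1/a_j$) and arranges $\varphi(\infty)=\infty$, so we may assume $\varphi\in\mbox{{\em Aut}}(\C)$.

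I do not anticipate a real obstacle; the proof is essentially a corollary of Theorem~\ref{t:main1} combined with a short local-degree analysis at $\infty$. The only mildly subtle step is verifying that post-composition by $z\mapsto 1/z$ preserves the normal form $a_j z^s$, which is immediate from a direct computation.
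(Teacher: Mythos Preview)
Your argument is correct and follows the paper's route exactly: squeeze $\delta(f)$ between $2$ and $(\log d)/\sum_j\tfrac{d_j}{d}\log d_j$ via Lemma~\ref{l:lyappbd} and Theorem~\ref{t:main1}(1), then invoke Theorem~\ref{t:main1}(2) and read off $d_0=s$ from $\delta(f)=1+\log s/\log d_0$. Your local-degree analysis at $\infty$ to exclude the $-d_0$ exponent and to arrange $\varphi\in\mbox{{\em Aut}}(\C)$ is in fact more careful than the paper, which simply asserts these conclusions when citing Theorem~\ref{t:main1}.
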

\begin{proof}
By the assumptions of our theorem,
Theorem~\ref{t:main1} 
and Lemma~\ref{l:lyappbd}, 
we obtain 
$$2\leq \frac{\log d}{\sum _{j=1}^{s}\frac{d_{j}}{d}\log d_{j}}\leq \delta (f)\leq 2.$$ 
Therefore 
\begin{equation}
\label{eq:mth2pf1}
2=\frac{\log d}{\sum _{j=1}^{s}\frac{d_{j}}{d}\log d_{j}}= \delta (f). 
\end{equation}
Thus, by Lemma~\ref{l:lyappbd}, we obtain 
$$\frac{\log d}{\int _{J(\tilde{f})}\log \| \tilde{f}'\| d\mu }= \delta (f),$$ 
where $\mu $ denotes the maximal entropy measure for $\tilde{f}: \Sigma _{s}\times \oc \rightarrow 
\Sigma _{s}\times \oc .$ By Theorem~\ref{t:main1}, 
it follows that there exists a $\varphi \in \mbox{Aut}(\C )$,  
non-zero complex numbers $a_{1},\ldots ,a_{s}$, and a number $d_{0}\in \N $ such that 
$d_{0}=d_{1}=\cdots d_{s}$ and $\varphi \circ f_{j}\circ \varphi ^{-1}(z)=a_{j}z^{d_{0}}$ for all $z\in \oc .$ 
By (\ref{eq:mth2pf1}), we obtain 
$$2=\frac{\log d}{\sum _{j=1}^{s}\frac{d_{j}}{d}\log d_{j}}=1+\frac{\log s}{\log d_{0}}.$$ 
Therefore, $d_{0}=s.$ Thus, we have completed the proof.  
\end{proof}
\begin{rem}
\label{r:deltal2}
Let $f=(f_{1},\ldots ,f_{s})\in \Exp (s).$ Suppose that 
$f$ satisfies the open set condition. Then $\delta (f)=\HD (J(\langle f_{1},\ldots ,f_{s}\rangle ))\leq 2$ (see \cite{sumi2}, \cite{sumi06}). 
\end{rem}
\begin{cor}
\label{c:2cri}
Let $f=(f_{1},f_{2})\in \Epb (2) .$ Suppose that $\deg (f_{1})=\deg (f_{2})=2.$ 
Then, the following statements {\em (1),(2), (3), (4)} are equivalent.
\begin{itemize}
\item[(1)] $\delta (f)\leq 2.$ 
\item[(2)] $\delta (f)= 2.$ 
\item[(3)] There exists a $\varphi \in \mbox{{\em Aut}}(\C )$ and a  
non-zero complex number $a$ such that 
$$\varphi \circ f_{1}\circ \varphi ^{-1}(z)= z^{2}, \varphi \circ f_{2}\circ \varphi ^{-1}(z)=
az^{2}\mbox{ for all } z\in \oc .$$ 
\item[(4)] either 
\begin{itemize}
\item[(a)] $f$ satisfies the open set condition or 
\item[(b)] there exists a $\varphi \in \mbox{{\em Aut}}(\C )$ and a  
complex number $a$ with $|a|=1$ such that 
$$\varphi \circ f_{1}\circ \varphi ^{-1}(z)= z^{2}, \varphi \circ f_{2}\circ \varphi ^{-1}(z)=
az^{2} \mbox{ for all } z\in \oc .$$  
\end{itemize}
\end{itemize} 
\end{cor}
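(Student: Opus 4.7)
With $s=2$, $d_1=d_2=2$, and $d=4$, one computes $(\log d)/\sum_j(d_j/d)\log d_j=\log 4/\log 2=2$. Applying Lemma~\ref{l:lyappbd} (valid since $f\in\Epb(2)$) yields $\int_{J(\tilde f)}\log\|\tilde f'\|\,d\mu=\log 2$, and Theorem~\ref{t:main1}(1) then forces $\delta(f)\geq 2$ unconditionally, instantly settling $(1)\Leftrightarrow(2)$. For $(1)\Rightarrow(3)$, invoke Theorem~\ref{t:main2}(1) to produce $\varphi_0\in\mbox{Aut}(\C)$ and nonzero $b_1,b_2$ with $\varphi_0 f_j\varphi_0^{-1}(z)=b_j z^2$; post-composing by the linear map $z\mapsto b_1 z$ normalizes $a_1=1$ and delivers $(3)$ with $a=b_2/b_1$.

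For $(3)\Rightarrow(4)$, I split on $|a|$. If $|a|=1$, case $(4)(b)$ is immediate. If $|a|\neq 1$, say $|a|>1$ (the other case is symmetric), I exhibit the OSC on the conjugated side via the annulus $U:=\{z\in\oc: 1/|a|<|z|<1\}$: a direct calculation gives $f_1^{-1}(U)=\{1/\sqrt{|a|}<|z|<1\}$ and $f_2^{-1}(U)=\{1/|a|<|z|<1/\sqrt{|a|}\}$, and these two sub-annuli lie in $U$, are disjoint, and their union equals $U$. Since the OSC is invariant under M\"obius conjugation (pull $U$ back by $\varphi$), the original $f$ satisfies $(4)(a)$.

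For $(4)\Rightarrow(1)$, case $(a)$ is handled by Remark~\ref{r:deltal2}: the OSC gives $\delta(f)=\HD(J(G))\leq 2$. In case $(b)$, under the normalization $f_1(z)=z^2$, $f_2(z)=az^2$ with $|a|=1$, both maps have $0$ and $\infty$ as super-attracting fixed points and preserve $\{|z|<r\}$ for $r\leq 1$ and $\{|z|>r\}$ for $r\geq 1$, so $\oc\setminus\{|z|=1\}\subset F(G)$; meanwhile the angle-doubling-with-rotation action on $\{|z|=1\}$ is chaotic, forcing $J(G)=\{|z|=1\}$. On this invariant circle $\|\tilde f'\|=|\tilde f'|=2$ everywhere on $J(\tilde f)$, so $P(t,f)=\log 4-t\log 2$, whence $\delta(f)=2$.

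The principal obstacle I foresee is the explicit OSC verification in $(3)\Rightarrow(4)$: the three radii $1/|a|,\,1/\sqrt{|a|},\,1$ must be chosen precisely so that the two preimage annuli tile $U$ without overlap; getting this arithmetic right is where the argument really pins down the structure. A secondary care-point in case $(4)(b)$ is that the paper's $\|\cdot\|$ denotes the spherical derivative whereas the clean computation uses the Euclidean modulus; this is harmless because $\log\|\tilde f'\|-\log|\tilde f'|$ is a coboundary $\psi-\psi\circ\tilde f$ with $\psi(z)=\log(1+|z|^2)$ continuous on the compact set $J(\tilde f)$, so the two potentials have the same topological pressure. Everything else is bookkeeping with the previously established theorems.
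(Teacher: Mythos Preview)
Your argument is correct and tracks the paper's own proof, which likewise deduces (1)$\Rightarrow$(2) and (2)$\Rightarrow$(3) from Theorem~\ref{t:main2}, declares (3)$\Rightarrow$(4) ``easy to see'', and appeals to Remark~\ref{r:deltal2} for (4)$\Rightarrow$(1). Your explicit annulus verifying the open set condition when $|a|\neq 1$ and your direct computation of $\delta(f)=2$ in case~(4)(b) flesh out steps the paper leaves to the reader (note, incidentally, that $f_1^{-1}(U)\cup f_2^{-1}(U)$ equals $U$ minus the circle $\{|z|=1/\sqrt{|a|}\}$, not all of $U$---but only the inclusion $\bigcup_j f_j^{-1}(U)\subset U$ is required, so this slip is harmless).
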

\begin{proof}
``(1)$\Rightarrow $(2)'' and   
``(2)$\Rightarrow $(3)'' follow from  Theorem~\ref{t:main2}. 
It is easy to see ``(3)$\Rightarrow $(4)''.  
``(4)$\Rightarrow $(1)'' follows from Remark~\ref{r:deltal2}. 
Thus, we have completed the proof.   
\end{proof}
\section{Remarks and examples}
\label{Remarks} 
In this section we collect some remarks and construct relevant
examples illustrating our main theorems. 

\begin{rem}[\cite{sumid3,sumid1}] 
\label{shshfinprop}
Let $s\geq 2$ and let $d_{2},\ldots ,d_{s}\in \N $ be such that  
$d_{j}\geq 2$ for each $j=2,\ldots ,s.$ Let 
$f_{1}\in \Epb(1).$ 
Let $b_{2},b_{3},\ldots ,b_{s}\in $ {\em int}$(K(f_{1})).$ 
Then, the following statements hold. 
\begin{enumerate}
\item \label{shshfinprop1}
There exists a number $c>0$ such that 
for each $(a_{2},a_{3},\ldots ,a_{s})\in \C ^{s-1}$ with 
$0<|a_{j}|<c$ ($j=2,\ldots ,s$), 
setting $f_{j}(z)=a_{j}(z-b_{j})^{d_{j}}+b_{j}$ ($j=2,\ldots ,s$), 
we have $(f_{1},\ldots ,f_{s})\in \Epb(s)$.
\item \label{shshfinprop2}
 Suppose also that 
either (i) there exists a $j\geq 2$ with $d_{j}\geq 3$, or 
(ii) $\deg(f_{1})=3$, $b_{2}=\cdots =b_{s}.$ Then, there exist 
$a_{2},a_{3},\ldots ,a_{s}>0$ such that setting 
$f_{j}(z)=a_{j}(z-b_{j})^{d_{j}}+b_{j}$ ($j=2,\ldots ,s$), 
we have $(f_{1},\ldots ,f_{s})\in \Epb(s)$ and 
$J(\langle f_{1},\ldots ,f_{s}\rangle )$ is disconnected.  
\end{enumerate} 
In \cite{sumid1,sumid3}, the first author of this paper 
provided a lot of methods of constructing of examples of elements of $\Epb(s). $ 
\end{rem}

We give below concrete examples of expanding polynomial semigroups
satisfying the open set condition.  
 
\begin{rem}
\label{r:dless2}
Let $f_{1}\in \Epb(1)$ and let $b\in \mbox{{\em int}}(K(f_{1})).$ 
Let $d_{1}:=\deg (f_{1}).$  
Let $d_{2}\in \N $ with $d_{2}\geq 2$ and suppose that 
$(d_{1}, d_{2})\neq (2,2).$ 
Then there exists a number $c>0$ such that for each 
$a\in \C $ with $0<|a|<c$, 
setting $f_{2}(z):=a(z-b)^{d_{2}}+b$ and $f=(f_{1}, f_{2})\in {\mathcal P}^{2}$, 
we have 
\begin{itemize}
\item[(a)]
$f\in \Epb(2)$,  
\item[(b)] $f$ satisfies the separating open set condition,  
\item[(c)] $\delta (f)<2$, and  
\item[(d)] setting $G=\langle f_{1},f_{2}\rangle $, 
we have $\HD (J(G))=\delta (f)<2.$ 
\end{itemize}
  For the proof of this result, see \cite{suetds1}. 
Moreover, by Theorem~\ref{p:hdlower}, setting $d:=d_{1}+d_{2}$, 
we have 
$$\HD (J(G))\geq \frac{\log d}{\sum _{j=1}^{2}\frac{d_{j}}{d}\log d_{j}}>1.$$ 
If $f_{1}$ and $f_{2}$ are not simultaneously conjugate to the form 
$az^{2}$ by an element in $\mbox{{\em Aut}}(\C )$, then by Theorem~\ref{t:main1}, 
$$\HD (J(G))> \frac{\log d}{\sum _{j=1}^{2}\frac{d_{j}}{d}\log d_{j}}>1.$$ 
See also Figure~\ref{fig:dcgraph}. 
\begin{figure}[htbp]
\caption{The Julia set of $G=\langle g_{1}^{2},g_{2}^{2}\rangle $, 
where $g_{1}(z):= z^{2}-1, g_{2}(z):= \frac{z^{2}}{4}.$ 
$f:=(g_{1}^{2},g_{2}^{2})$ satisfies (a)--(d) in Remark~\ref{r:dless2}. 
Moreover, by Theorem~\ref{p:hdlower}, 
$\frac{\log 8}{\log 4}=\frac{3}{2}<\HD(J(G))<2.$
}
\includegraphics[width=3cm,width=3cm]{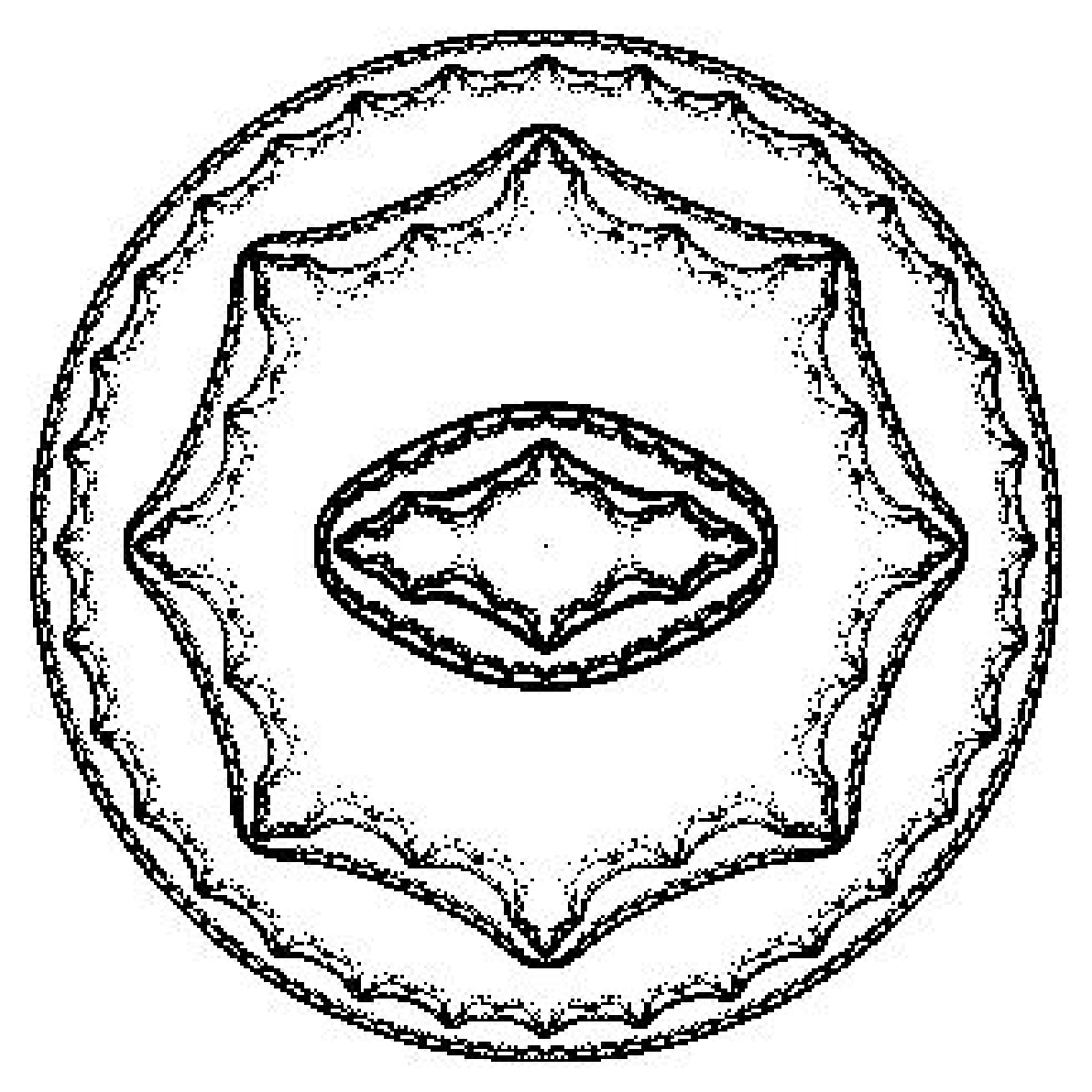}
\label{fig:dcgraph}
\end{figure}

\end{rem}
We give  examples of elements $f=(f_{1},f_{2})\in \Epb(2)$ with $\delta (f)>2.$ 
\begin{ex}[\cite{sumid3}]
\label{ex:dlarger2}
Let $f_{1}\in \Epb(1)$ with $\deg (f_{1})=2.$ 
Let $b\in \mbox{{\em int}}(K(f_{1}))$, where $K(\cdot )$ denotes the filled-in Julia set. 
Then, by \cite{sumid3}, there exists a number $c>0$ such that 
for each $a\in \C $ with $0<|a|<c$, setting $f_{2}(z)=a(z-b)^{2}+b$, 
we have $f:=(f_{1},f_{2})\in \Epb(2).$ By Corollary~\ref{c:2cri}, 
it follows that if $f_{1}$ and $f_{2}$ are not simultaneously conjugate to the form $az^{2}$ by 
an element in $\mbox{{\em Aut}}(\C )$, 
then $\delta (f)>2.$ See Figure~\ref{fig:dlarger2}. 
  \begin{figure}[htbp]
\caption{The Julia set of $G=\langle f_{1},f_{2}\rangle $, 
where $f_{1}(z):= z^{2}-1, f_{2}(z):= 0.09z^{2}.$ 
$f:=(f_{1},f_{2})$ belongs to $\Epb(2) $ (see \cite{sumid3}). 
By Corollary~\ref{c:2cri},  $\delta (f)>2.$ }
\includegraphics[width=3cm,width=3cm]{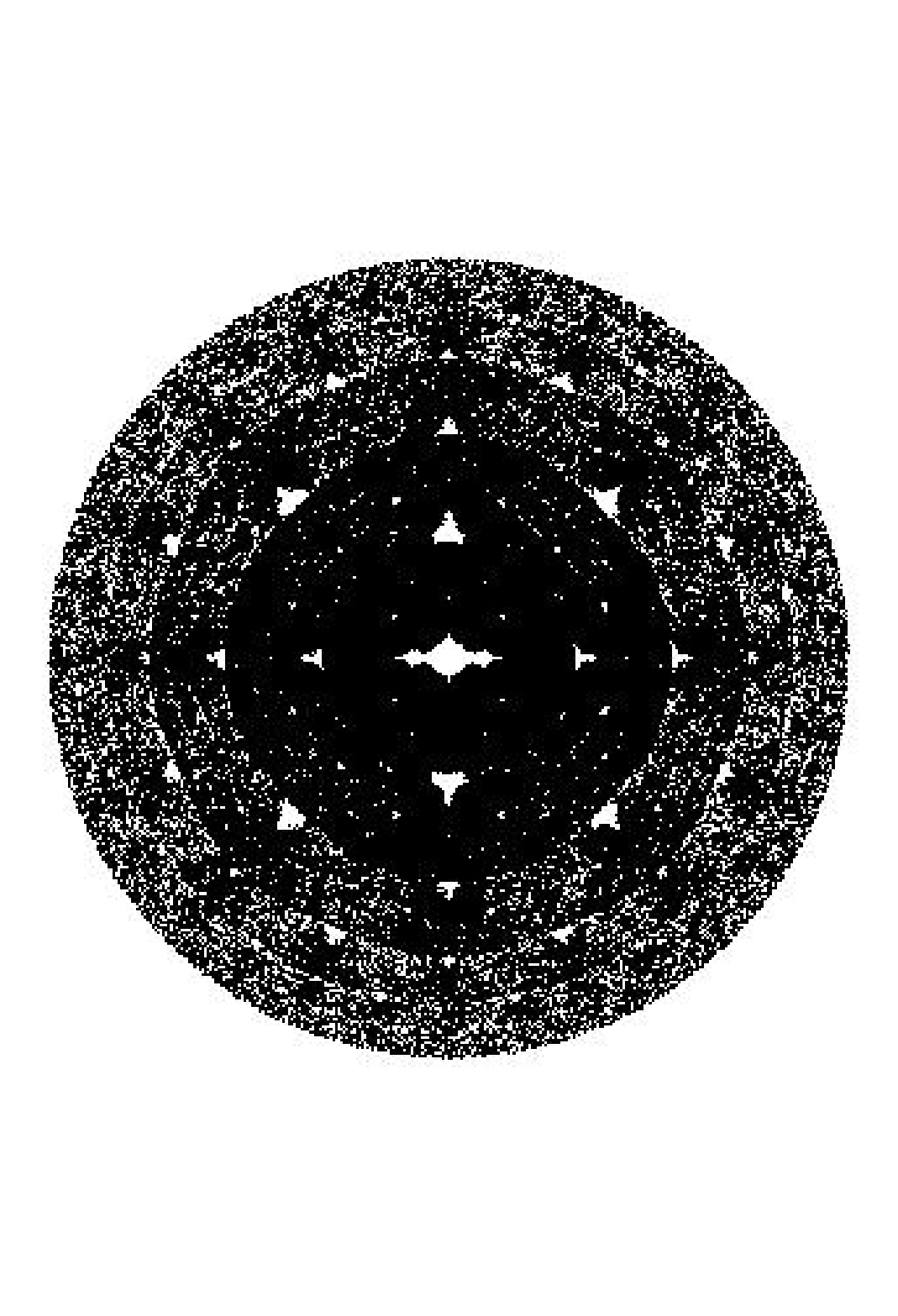}
\label{fig:dlarger2}
\end{figure}
 
\end{ex}
\begin{ex}
Let $f_{1}(z)=z^{2}$. For each $c\in \C $, let $f_{2,c}=\frac{1}{4}z^{2}+c.$ 
Let $f_{c}:=(f_{1},f_{2,c})$ and $G_{c}:= \langle f_{1},f_{2,c}\rangle .$  
Then by Lemma~\ref{l:epbopen}, 
there exists a number $c_{0}>0$ such that for each $c\in \C $ with $|c|<c_{0}$, 
$f_{c}\in \Epb(2).$ Moreover, by Corollary~\ref{c:2cri}, $\delta (f_{0})=2$ and 
for each $c\in \C $ with $0<|c|<c_{0}$, $\delta (f_{c})>2.$ 
Let $U_{0}$ be the connected component of $\Exp (2)$ with $(f_{1},f_{2,0})\in U_{0}.$ 
Since $c\mapsto \delta (f_{c})$ is real analytic on $U_{0}$ (\cite{suetds1}), it follows that 
$c\mapsto \delta (f_{c})$ is not constant in any open subset of $U_{0}.$ 
\end{ex}
\begin{rem}
\label{r:dimran}
 We remark that, as illustrated in \cite{sumikokyuroku, sumiprepare}, 
 estimating the Hausdorff dimension of the Julia sets of 
 rational semigroups plays an important role when we 
 investigate random complex dynamics and its associated 
 Markov process on $\oc .$ 
For example, 
 when we consider the random dynamics of a compact 
 family $\Gamma $ of polynomials 
 of degree greater than or equal to two,  
 then the function $T_{\infty }:\oc \rightarrow 
 [0,1]$ representing the probability of tending to $\infty \in \oc $ 
 varies only on a subset of the Julia set of the polynomial semigroup  
 generated by $\Gamma $, and under certain conditions, 
 the function $T_{\infty }:\oc \rightarrow [0,1]$ is continuous on $\oc .$
If the Hausdorff dimension of the Julia set is strictly less than two, 
then it means that $T_{\infty }:\oc \rightarrow [0,1]$ is a 
complex version of devil's staircase (Cantor function) 
(\cite{sumirandom,sumiprepare}). 
For example, setting $g_{1}(z):=z^{2}-1, g_{2}(z):=\frac{z^{2}}{4}$, 
 $f_{1}:=g_{1}^{2}$, and $f_{2}:=g_{2}^{2}$,
 we consider the random dynamics on $\oc $ such that at every step we 
 choose a map $f_{j}$ with probability $0<p_{j}<1$, where $p_{1}+p_{2}=1.$ 
Then the function $T_{\infty }$ representing the probability of tending to $\infty $ 
is continuous on $\oc $ and varies exactly on the Julia set (Figure~\ref{fig:dcgraph}) 
of the polynomial semigroup $\langle f_{1},f_{2}\rangle $, whose
Hausdorff dimension is strictly less than two (see
\cite{sumirandom,sumiprepare}).   
\end{rem}

\end{document}